\numberwithin{equation}{section}
\numberwithin{figure}{section}
\theoremstyle{plain}
\newtheorem{thm}{\protect\theoremname}
  \theoremstyle{plain}
  \newtheorem{lem}[thm]{\protect\lemmaname}
  \theoremstyle{definition}
  \newtheorem{rem}[thm]{\protect\remarkname}
  \theoremstyle{plain}
  \newtheorem{prop}[thm]{\protect\propositionname}
  \providecommand{\lemmaname}{Lemma}
  \providecommand{\propositionname}{Proposition}
  \providecommand{\remarkname}{Remark}
\providecommand{\theoremname}{Theorem}
\DeclareMathOperator{\Ext}{Ext}
\DeclareMathOperator{\Sym}{Sym}
\DeclareMathOperator{\op}{op}
\DeclareMathOperator{\Aut}{Aut}
\DeclareMathOperator{\la}{la}
\begin{document}
\newcommand{\Zp}{{\mathbb{Z}_p}}
\newcommand{\Qp}{{\mathbb{Q}_p}}
\newcommand{\Fp}{{\mathbb{F}_p}}
\newcommand{\fr}[1]{\mathfrak{{#1}}}
\newcommand{\mb}{\mathbb}
\newcommand{\mc}{\mathcal}
\newcommand{\mf}{\mathfrak}
\title{A canonical dimension estimate for non-split semisimple p-adic Lie groups}
\author{Konstantin Ardakov and Christian Johansson}

\begin{abstract}
We prove that the canonical dimension of an admissible Banach space or a locally analytic representation of an arbitrary semisimple $p$-adic Lie group is either zero or at least half the dimension of a non-zero coadjoint orbit. This extends the results of Ardakov-Wadsley and Schmidt in the split semisimple case.
\end{abstract}
\subjclass[2010]{11F85, 16S99, 22E50}
\keywords{$p$-adic Banach space representations, locally analytic representations, canonical dimension}
\maketitle

\section{Introduction}

This paper can be regarded as a postscript to \cite{AW1}. Its purpose
 is to record an argument that extends \cite[Theorem A]{AW1}
to compact semisimple $p$-adic analytic groups whose Lie algebra is not necessarily split
($p$ is a prime number). An analogue of \cite[Theorem A]{AW1} for distribution algebras was proved by Schmidt in \cite[Theorem 9.9]{Sch}; our argument works to extend this result as well. For simplicity, we mostly focus on the setting of Iwasawa algebras in this introduction. Let us recall that if $G$ is a compact
$p$-adic analytic group and $KG$ is its completed group ring (or
Iwasawa algebra) with respect to a finite field extension $K/\Qp$,
then the category $\mathcal{M}$ of finitely generated $KG$-modules
is an abelian category antiequivalent to the category of admissible
$K$-Banach space representations of $G$ (\cite{ST}). $\mathcal{M}$, and related
categories of $p$-adic representations of (locally) compact $p$-adic
analytic groups have received a lot of attention in the last decades,
motivated by research in the Langlands programme and Iwasawa theory.
Each object $M$ in $\mathcal{M}$ has a non-negative integer $d(M)=d_{KG}(M)$
attached to it called its canonical dimension. This notion
gives rise to a natural filtration of $\mathcal{M}$
\[
\mathcal{M}=\mathcal{M}_{d}\supseteq\mathcal{M}_{d-1}\supseteq...\supseteq\mathcal{M}_{0}
\]
by Serre subcategories, where $d=\dim\, G$ and $M\in\mathcal{M}_{i}$
if and only $d(M)\leq i$. The canonical dimension $d(M)$ may be
taken as a measure of the ``size'' of $M$; for example it is known
that $d(M)=0$ if and only $M$ is finite dimensional as $K$-vector
space, and $d(M)<d$ if and only if $M$ is torsion. It may also be regarded as a noncommutative analogue of the dimension of the support of a module in the commutative setting. The following result, together with its analogue for distribution algebras (Theorem \ref{thm: main2}) is the main result of this note:

\begin{thm}
\label{thm: main}Let $G$ be a compact $p$-adic analytic group whose
Lie algebra $\mathcal{L}(G)$ is semisimple, and let $G_{\mathbb{C}}$
denote a complex semisimple algebraic group with the same root system
as $\mathcal{L}(G)\otimes_{\Qp}\overline{\mathbb{Q}}_{p}$.
Let $M$ be a finitely generated $KG$-module which is infinite dimensional
as a $K$-vector space, and assume that $p$ is an odd very good prime
for $G$. Then $d_{KG}(M)\geq r$, where $r$ is half the smallest
possible dimension of a nonzero coadjoint $G_{\mathbb{C}}$-orbit.
\end{thm}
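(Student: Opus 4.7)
The strategy is to reduce to the split semisimple case treated in \cite[Theorem A]{AW1} by enlarging the coefficient field $K$ until $\mathcal{L}(G)$ becomes split. First I would replace $G$ by an open uniform pro-$p$ subgroup $H$: since $KG$ is free of finite rank as a $KH$-module, any finitely generated $KG$-module is finitely generated over $KH$ with the same canonical dimension, so we may assume $G$ itself is uniform.

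Next, choose a finite extension $L/K$ such that $\mathfrak{g}_L := \mathcal{L}(G) \otimes_{\Qp} L$ is split semisimple as an $L$-Lie algebra; such $L$ exists because $\mathcal{L}(G)$ becomes split over $\overline{\Qp}$. Set $M_L := L \otimes_K M$, a finitely generated $LG$-module that remains infinite-dimensional over $L$. Since $LG \cong L \otimes_K KG$ is free of finite rank as a $KG$-module, faithfully flat base change preserves the grade $j_{KG}(M) = j_{LG}(M_L)$, via the compatibility of $\Ext$ with flat extensions, and hence preserves the canonical dimension: $d_{LG}(M_L) = d_{KG}(M)$. The algebra $LG$ now falls under \cite[Theorem A]{AW1}: the relevant Lie algebra $\mathfrak{g}_L$ is split semisimple, the hypothesis that $p$ be odd and very good is preserved (it depends only on the absolute root system), and the root system of $\mathfrak{g}_L \otimes_L \overline{L}$ coincides with that of $G_{\mathbb{C}}$, so the minimal nonzero coadjoint orbit dimension is the same $2r$.

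Applying the split-case result to $M_L$ then gives $d_{LG}(M_L) \geq r$, and combining with the base-change equality yields the desired bound $d_{KG}(M) \geq r$. The main step requiring care is the invariance of canonical dimension under finite extension of scalars: this is a routine consequence of the compatibility of good filtrations (or of $\Ext$) with flat base change, but it must be checked against the specific conventions used in \cite{AW1}, and this verification, together with the small bookkeeping of matching root-system-dependent invariants across the base change, is the only real obstacle I anticipate.
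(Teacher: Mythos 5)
Your proposal has a genuine gap at the central step. The hypothesis of \cite[Theorem A]{AW1} is that the $\Qp$-Lie algebra $\mathcal{L}(G)$ of the compact $p$-adic analytic group $G$ is split semisimple \emph{over} $\Qp$. Enlarging the coefficient field of the Iwasawa algebra from $K$ to $L$ does not change the group $G$, and hence does not change $\mathcal{L}(G)$ or whether it is split: the algebra $LG$ is still the Iwasawa algebra of the \emph{same} $\Qp$-analytic group $G$. Applying \cite[Theorem A]{AW1} to $M_L$ over $LG$ therefore still requires $\mathcal{L}(G)$ to be split over $\Qp$, which is precisely the hypothesis we are trying to remove. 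You have conflated the (automatic) splitness of $\mathcal{L}(G)\otimes_{\Qp}L$ as an $L$-Lie algebra with the splitness of $\mathcal{L}(G)$ as a $\Qp$-Lie algebra that the cited theorem actually demands; the base-change equality $d_{LG}(M_L)=d_{KG}(M)$ is correct but gets you nowhere, since the right-hand side has not been brought into the scope of the split-case theorem.

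The missing ingredient is the sandwich argument at the level of affinoid enveloping algebras, which is the real content of the paper's proof. After reducing to $G$ uniform and $K=\Qp$, one microlocalises $\Qp G$ to obtain a nonzero module over the crossed product $\mathcal{U}_n\ast H_n$, where $\mathcal{U}_n=\widehat{U(p^n\mathfrak{h})}_{\Qp}$ with $\mathfrak{h}=p^{-1}L_G$. One then base changes to a splitting field $F$, obtaining a module over $\widehat{U(p^n\mathfrak{h}\otimes_{\Zp}\mathcal{O}_F)}_F$. But even over $F$, the lattice $p^n\mathfrak{h}\otimes_{\Zp}\mathcal{O}_F$ need not be a $p$-power multiple of a split $\mathcal{O}_F$-lattice, which is what the localisation machinery of \cite{AW1} actually works with. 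The paper therefore chooses a split lattice $\mathfrak{g}$ and integers $m,n$ with $p^n\mathfrak{h}\otimes_{\Zp}\mathcal{O}_F\subseteq p^m\mathfrak{g}\subseteq\mathfrak{h}\otimes_{\Zp}\mathcal{O}_F$, proves the induced map $\widehat{U(p^n\mathfrak{h}\otimes_{\Zp}\mathcal{O}_F)}_F\to\widehat{U(p^m\mathfrak{g})}_F$ is flat (Lemma \ref{lem: flatness}), and uses Lemma \ref{lem: sandwich} together with \cite[Proposition 2.6]{AW1} to transfer the problem to $\widehat{U(p^m\mathfrak{g})}_F$, to which \cite[Theorem 9.10]{AW1} applies. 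A naive coefficient extension alone does not reach this, and your plan, as written, would have to be reworked from the microlocalisation step onward.
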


For the definition of a very good prime see \cite[\S 6.8]{AW1}. We
recall that Theorem A of \cite{AW1} proves the same conclusion under
the additional hypothesis that $\mathcal{L}(G)$ is \emph{split} semisimple.
Since the Langlands programme often deals with groups that are not
split, it seemed natural to wish to remove this assumption from the
theorem. Examples of such nonsplit groups are $G=\mathbb{G}(K)$, where $\mathbb{G}/K$ is an anisotropic semisimple algebraic group. 

The proof is a variation of that of the split case. Let us first give a
short rough description of the split case and refer to the introduction of \cite{AW1}
for more details. A short argument reduces the problem to the
case when $G$ is a uniform pro-$p$ group. This implies we may associate
to $G$ a certain $\Zp$-Lie algebra $\mathfrak{h}$. The algebra $KG$ comes with a morphism into an inverse system $(D_{w}=D_{p^{-1/w}}(G,K))_{w=p^{n}}$
of distribution algebras of $G$, whose inverse limit is the locally
analytic distribution algebra $D(G,K)$ with coefficients in $K$
studied in detail by Schneider and Teitelbaum. For large enough $w=p^{n}$,
$D_{w}\otimes_{KG}M\neq0$. The algebra $D_{w}$ turns out to be a
crossed product $\mathcal{U}_{n}\ast G/G^{p^{n}}$, where $\mathcal{U}_{n}=\widehat{U(p^{n}\mathfrak{h})}_{K}$
is an affinoid enveloping algebra, and it turns out that it is enough
to prove the analogous result for the algebras $\mathcal{U}_{n}$.
The proof of this analogue relies on some technical work: analogues
of Beilinson-Bernstein localisation, Quillen's Lemma and Bernstein's
Inequality. The assumption that $\mathcal{L}(G)$ is split comes up
in the localisation theory, which works for affinoid enveloping algebras
of $\mathcal{O}_{L}$-Lie algebras that are of the form $\pi^{m}\mathfrak{g}$,
with $L/\Qp$ a finite extension, $\pi$ a uniformizer
in $L$ and $\mathfrak{g}$ a split Lie algebra over $\mathcal{O}_{L}$. 

In this paper we follow the same reduction steps to get nonzero $\mathcal{U}_{n}$-modules
$D_{p^{n}}\otimes_{KG}M$ for sufficiently large $n$. Since the techniques of \cite{AW1} are not strong enough to deduce
a canonical dimension estimate for the affinoid enveloping algebra $\mathcal{U}_{n}$ in
this case, we base change to a finite extension $L/\Qp$
such that the Lie algebra $\mathcal{L}(G)\otimes_{\Qp}L$ is split and
hence has a split lattice $\mathfrak{g}$ in addition to the lattices
$p^{n}\mathfrak{h}\otimes_{\Zp}\mathcal{O}_{L}$. We then
sandwich a $p$-power multiple of $\mathfrak{g}$ in between two sufficiently
large $p$-power multiples of $\mathfrak{h}\otimes_{\Zp}\mathcal{O}_{L}$
in order to base change one of the $D_{p^{n}}\otimes_{KG}M$ to an affinoid enveloping algebra for which the theory of \cite{AW1} applies. 

As remarked earlier, Schmidt gave an analogue of \cite[Theorem A]{AW1} for coadmissible modules of the distribution algebra $D(G,K)$, using the methods of \cite{AW1}. Here we may now allow $G$ to be a locally $L$-analytic group for some intermediate extension $K/L/\Qp$. Our argument works equally well in this setting to remove the splitness hypothesis of \cite[Theorem 9.9]{Sch}, giving us the following theorem:

\begin{thm}
\label{thm: main2}Let $G$ be a locally $L$-analytic group whose
Lie algebra $\mathcal{L}(G)$ is semisimple, and let $G_{\mathbb{C}}$
denote a complex semisimple algebraic group with the same root system
as $\mathcal{L}(G)\otimes_{L}\overline{\mathbb{Q}}_{p}$.
Let $M$ be a coadmissible $D(G,K)$-module such that $d_{D(G,K)}(M)\geq 1$ and assume that $p$ is an odd very good prime
for $G$. Then $d_{D(G,K)}(M)\geq r$, where $r$ is half the smallest
possible dimension of a nonzero coadjoint $G_{\mathbb{C}}$-orbit.
\end{thm}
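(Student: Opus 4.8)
The plan is to follow the reduction steps of \cite{AW1} described in the introduction, inserting one extra base-change step that replaces the possibly non-split $\mathcal{O}_L$-Lie lattice attached to $G$ by a split one over a larger coefficient field, after which the affinoid enveloping algebra case of \cite[Theorem A]{AW1} applies. Write $\mathfrak{l}=\mathcal{L}(G)$ and $d=d_{D(G,K)}(M)\ge 1$. First I would reduce to a convenient compact group: as $D(G,K)$ is finite free over $D(H,K)$ for each open subgroup $H\le G$ and restriction of a coadmissible module preserves coadmissibility and canonical dimension \cite{Sch}, we may replace $G$ by a compact open locally $L$-analytic subgroup carrying an $\mathcal{O}_L$-Lie lattice $\mathfrak{h}\subseteq\mathfrak{l}$ for which the Fr\'echet--Stein presentation $D(G,K)=\varprojlim_n D_n$ has terms $D_n=\mathcal{U}_n\ast(G/G_n)$, a crossed product of the affinoid enveloping algebra $\mathcal{U}_n=\widehat{U(\pi^n\mathfrak{h})}_K$ ($\pi$ a uniformizer of $L$) with a finite $p$-group. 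With $M=\varprojlim_n M_n$, $M_n=D_n\otimes_{D(G,K)}M$, one has $d=d_{D_n}(M_n)$ for all large $n$ \cite{Sch}, and as $D_n$ is module-finite free over $\mathcal{U}_n$ this equals $d_{\mathcal{U}_n}(M_n)$; hence $d_{\mathcal{U}_n}(M_n)=d\ge 1$ for all large $n$.

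Then I would base change to split the Lie algebra. Fix a finite extension $L'/L$ with $\mathfrak{l}_{L'}:=\mathfrak{l}\otimes_L L'$ split, a Chevalley $\mathcal{O}_{L'}$-Lie lattice $\mathfrak{g}\subseteq\mathfrak{l}_{L'}$, and set $K'=KL'$ (uniformizer $\varpi$), $\mathfrak{h}'=\mathfrak{h}\otimes_{\mathcal{O}_L}\mathcal{O}_{L'}$, $\mathcal{A}_n=\mathcal{U}_n\otimes_K K'=\widehat{U(\pi^n\mathfrak{h}')}_{K'}$ and $M_n'=M_n\otimes_K K'$; base change along the finite extension $K'/K$ preserves canonical dimension, so $d_{\mathcal{A}_n}(M_n')=d$ for all large $n$. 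Since $\pi^n\mathfrak{h}'$ and $\mathfrak{g}$ are commensurable $\mathcal{O}_{L'}$-Lie lattices in $\mathfrak{l}_{L'}$, for all large $n$ and all $n'\gg n$ one may choose an exponent $t$ producing a chain of $\mathcal{O}_{L'}$-Lie lattices $\pi^{n'}\mathfrak{h}'\subseteq\varpi^t\mathfrak{g}\subseteq\pi^n\mathfrak{h}'$; functoriality of the affinoid enveloping algebra then supplies $\mathcal{O}_{K'}$-algebra maps $\mathcal{A}_{n'}\xrightarrow{\,\alpha\,}\mathcal{V}_t:=\widehat{U(\varpi^t\mathfrak{g})}_{K'}\xrightarrow{\,\beta\,}\mathcal{A}_n$ whose composite is the transition map $\mathcal{A}_{n'}\to\mathcal{A}_n$, and I would invoke \cite{AW1} for the flatness of $\alpha$ and $\beta$, these being maps of affinoid enveloping algebras attached to inclusions of Lie lattices of bounded index. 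The point is that $\mathcal{V}_t$ is the affinoid enveloping algebra of a $\varpi$-power multiple of a \emph{split} semisimple $\mathcal{O}_{L'}$-lattice, while $p$ remains an odd very good prime and $r$ is unchanged, these depending only on the root system of $\mathfrak{l}_{L'}\otimes\overline{\mathbb{Q}}_p=\mathfrak{l}\otimes_L\overline{\mathbb{Q}}_p$.

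Finally I would transport the module and conclude. Put $N=\mathcal{V}_t\otimes_{\mathcal{A}_{n'}}M_{n'}'$, a finitely generated $\mathcal{V}_t$-module. From the coherence $M_n=D_n\otimes_{D_{n'}}M_{n'}$ of the Fr\'echet--Stein structure and the above factorisation one gets $\mathcal{A}_n\otimes_{\mathcal{V}_t}N\cong M_n'\ne 0$, so $N\ne 0$. The algebras $\mathcal{A}_n,\mathcal{A}_{n'},\mathcal{V}_t$ are Auslander-regular of the common dimension $\dim_L\mathfrak{l}$, so the standard comparison of grade under flat base change yields $d_{\mathcal{V}_t}(N)\le d_{\mathcal{A}_{n'}}(M_{n'}')=d$ (using $\alpha$) together with $d=d_{\mathcal{A}_n}(M_n')=d_{\mathcal{A}_n}(\mathcal{A}_n\otimes_{\mathcal{V}_t}N)\le d_{\mathcal{V}_t}(N)$ (using $\beta$), whence $d_{\mathcal{V}_t}(N)=d\ge 1$. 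Applying the affinoid enveloping algebra case of \cite[Theorem A]{AW1}, legitimate for $\mathcal{V}_t$ precisely because $\mathfrak{g}$ is split and valid since $d_{\mathcal{V}_t}(N)\ge 1$, we deduce $d_{\mathcal{V}_t}(N)\ge r$, and therefore $d_{D(G,K)}(M)=d=d_{\mathcal{V}_t}(N)\ge r$.

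The main obstacle I anticipate is the base-change step: establishing the flatness of the interpolating maps $\alpha,\beta$ between the affinoid enveloping algebras of the incommensurable lattices $\mathfrak{h}'$ and $\mathfrak{g}$, and checking that these algebras share the homological invariants (Auslander-regularity, dimension) that let the grade comparison pin down $d_{\mathcal{V}_t}(N)$ exactly; this, along with the bookkeeping of the sandwiching exponents $n,t,n'$, is where the general affinoid enveloping algebra machinery of \cite{AW1} --- rather than its split-specific localisation theory --- has to be pushed through.
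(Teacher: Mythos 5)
Your overall strategy --- reduce to $L$-uniform groups, split the Lie algebra over a finite extension $K'/K$, sandwich a split lattice $\varpi^t\mathfrak{g}$ between two powers of $\mathfrak{h}'$ to produce an intermediate affinoid enveloping algebra $\mathcal{V}_t$, push $M$ through, and invoke the split affinoid result of \cite{AW1} --- is essentially the one the paper uses. However, there is a genuine gap in the step that pins down $d_{\mathcal{V}_t}(N)$ from below, and it is precisely the gap that the paper's ``sandwich lemma'' (Lemma \ref{lem: sandwich}) is designed to fill.

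The problem is the claimed isomorphism $\mathcal{A}_n\otimes_{\mathcal{V}_t}N\cong M_n'$. Unwinding, this amounts to asserting $\mathcal{A}_n\otimes_{\mathcal{A}_{n'}}M_{n'}'\cong M_n'$ as $\mathcal{A}_n$-modules, which you want to deduce from the Fr\'echet--Stein coherence $M_n = D_n\otimes_{D_{n'}}M_{n'}$. But the coherence is a statement about tensoring over the crossed products $D_{n'} = \mathcal{U}_{n'}\ast H_{n'}$ and $D_n = \mathcal{U}_n\ast H_n$, not over the affinoid subrings $\mathcal{U}_{n'}$ and $\mathcal{U}_n$, and these tensor products genuinely differ. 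Concretely, if one takes $M_{n'} = D_{n'}$ then $D_n\otimes_{D_{n'}}M_{n'} = D_n$ is free of rank $|H_n|$ over $\mathcal{U}_n$, while $\mathcal{U}_n\otimes_{\mathcal{U}_{n'}}M_{n'}$ is free of rank $|H_{n'}| > |H_n|$ over $\mathcal{U}_n$, so the two modules are not isomorphic. This falsifies the chain $d = d_{\mathcal{A}_n}(M_n') = d_{\mathcal{A}_n}(\mathcal{A}_n\otimes_{\mathcal{V}_t}N)\leq d_{\mathcal{V}_t}(N)$, and without some lower bound on $d_{\mathcal{V}_t}(N)$ one cannot even conclude $d_{\mathcal{V}_t}(N)\geq 1$, which is needed to invoke \cite[Theorem 9.10]{AW1} at the last step.

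The paper avoids this pitfall by arguing on the dual side. It sets $N = \Ext^j_D(M,D)$, ensures $N\otimes_D D_n\neq 0$ for large $n$ by \cite[Lemma 8.4]{ST2}, and then applies Lemma \ref{lem: sandwich}: for a commuting square with bottom row $S = A\ast H\to T$ and top row $A\to B$, the identity $\Ext^i_A(M,A) = \Ext^i_S(M,S)$ of right $A$-modules (the crossed-product $\Ext$-restriction, \cite[Lemma 5.4]{AB}) transfers non-vanishing of $\Ext^j_S(M,S)\otimes_S T$ to non-vanishing of $\Ext^j_A(M,A)\otimes_A B$. Combined with flatness of $A\to B$ this forces $j_B(B\otimes_A M) = j_A(M)$ exactly. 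This $\Ext$-restriction input has no counterpart in your argument; the surjection $D_n\otimes_{\mathcal{U}_{n'}}M_{n'}\twoheadrightarrow D_n\otimes_{D_{n'}}M_{n'}$ does at least give that $M_n'$ is a quotient of a finite direct sum of copies of $\mathcal{A}_n\otimes_{\mathcal{V}_t}N$, which would salvage the inequality, but as written the claimed isomorphism is incorrect and must be replaced by a careful crossed-product argument of the kind the paper supplies. Two smaller points: your $\pi$ (a uniformizer of $L$) should be $p$ in the powers $p^n\mathfrak{h}$ appearing in the Lazard isomorphism; and the flatness of the backward map $\beta\colon\mathcal{V}_t\to\mathcal{A}_n$, which you invoke, requires checking the normalization hypothesis $[\pi^n\mathfrak{h}',\varpi^t\mathfrak{g}]\subseteq\varpi^t\mathfrak{g}$ of Lemma \ref{lem: flatness}, not merely a bounded-index statement; the paper in fact never needs flatness in this direction, only for $\alpha$.
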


We remark that Theorem \ref{thm: main} follows from Theorem \ref{thm: main2} by the faithful flatness of the distribution algebra over the Iwasawa algebra (\cite[Theorem 5.2]{ST2}). Nevertheless we have opted to give independent arguments, keeping the proof of Theorem \ref{thm: main} independent of \cite[Corollary 5.13]{Sch} (in the case $L=\Qp$), which was conjectured in \cite{AW1} but neither proved nor needed. We also remark that, for a coadmissible $D(G,K)$-module $M$, $d_{D(G,K)}(M)=0$ if and only if $D_{w}\otimes_{D(G,K)}M$ is a finite-dimensional $K$-vector space for all $w$.

Let us now describe the contents of this paper. Section \ref{sec: flatness}
proves the necessary flatness results needed make the sandwich argument
work and recalls some generalities on crossed products. The flatness
result is a consequence of a general result of Berthelot and Emerton.
Section \ref{sec: main} introduces the main players, discusses some
complements to results in \cite{AW1}, and then proves Theorem \ref{thm: main},
fleshing out the strategy described above. Finally, section 4 proves Theorem \ref{thm: main2}.

The second author was supported by National Science Foundation grants 093207800 and DMS-1128155 during the work on this paper whilst a member of MSRI and the IAS respectively, and would like to thank both institutions for their hospitality.  The first author was supported by EPSRC grant EP/L005190/1.

\section{Flatness and Crossed Products\label{sec: flatness}}

Fix a prime number $p$ and let $K$ be a finite extension of $\Qp$.
In this section we give a condition for the natural map between affinoid
enveloping algebras induced from an inclusion of finite free $\mathcal{O}_{K}$-
Lie algebras of the same rank to be left and right flat. All completions
arising in this paper are $p$-adic completions. We use a technique
due to Berthelot, abstracted by Emerton in  \cite[Proposition 5.3.10]{Em}. 

\begin{lem}

\label{lem: flatness}Let $\mathfrak{h}\subseteq\mathfrak{g}$ be
$\mathcal{O}_{K}$-Lie algebras, both finite free of rank $d$. Assume
that $[\mathfrak{g},\mathfrak{h}]\subseteq\mathfrak{h}$. Then \textup{$\widehat{U(\mathfrak{g})}_{K}$
is a flat left and right $\widehat{U(\mathfrak{h})}_{K}$-module.}

\end{lem}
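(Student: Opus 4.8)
The plan is to apply the criterion of Berthelot--Emerton (\cite[Proposition 5.3.10]{Em}), which reduces flatness of the $p$-adically completed map $\widehat{U(\mathfrak{h})}_K \to \widehat{U(\mathfrak{g})}_K$ to two things: first, that $U(\mathfrak{g})$ is flat (indeed free) over $U(\mathfrak{h})$ before completing, and second, a statement about the associated graded rings with respect to suitable $p$-adic/filtration data — essentially that the map is "topologically flat" in the sense that passing to $\bmod\, p$ and to the Rees construction behaves well. Since $\mathfrak{h}$ and $\mathfrak{g}$ are both finite free $\mathcal{O}_K$-modules of the same rank $d$, after choosing a basis of $\mathfrak{g}$ adapted to the inclusion (using the elementary divisor theorem over the PID $\mathcal{O}_K$), one gets elements $x_1,\dots,x_d$ of $\mathfrak{g}$ such that $\pi^{a_1}x_1,\dots,\pi^{a_d}x_d$ is an $\mathcal{O}_K$-basis of $\mathfrak{h}$, where $\pi$ is a uniformizer and $a_i \geq 0$.

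First I would record the purely algebraic input: by the Poincaré--Birkhoff--Witt theorem, $U(\mathfrak{g})$ is free as a left (and right) $\mathcal{O}_K$-module on ordered monomials in $x_1,\dots,x_d$, and likewise $U(\mathfrak{h})$ is free on ordered monomials in the $\pi^{a_i}x_i$. It follows that $U(\mathfrak{g})$ is free as a left and right $U(\mathfrak{h})$-module, on the finite-or-infinite set of monomials $x_1^{b_1}\cdots x_d^{b_d}$ with $0 \leq b_i < \infty$ reduced appropriately — more precisely one checks that multiplication gives an iso of filtered $U(\mathfrak{h})$-modules once we filter both sides by total degree, using that $[\mathfrak{g},\mathfrak{h}]\subseteq\mathfrak{h}$ guarantees $U(\mathfrak{h})$ is a normalizing (two-sided) subalgebra so that left and right module structures are comparable. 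The hypothesis $[\mathfrak{g},\mathfrak{h}]\subseteq\mathfrak{h}$ is exactly what makes $\widehat{U(\mathfrak{h})}_K$ sit inside $\widehat{U(\mathfrak{g})}_K$ as a subalgebra normalized by the image of $\mathfrak{g}$, and it is what will be needed to match the left and right flatness claims.

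Next I would set up the filtrations needed for Emerton's proposition: equip $U(\mathfrak{g})$ and $U(\mathfrak{h})$ with the $p$-adic filtration (or a combined PBW-degree-plus-$p$-adic filtration) whose completions are the affinoid enveloping algebras $\widehat{U(\mathfrak{g})}_K$ and $\widehat{U(\mathfrak{h})}_K$, and compute the associated graded rings. Here $\operatorname{gr} U(\mathfrak{g})$ is a polynomial ring over $\operatorname{gr}\mathcal{O}_K$ (by PBW, since the bracket lands in lower filtration degree), and similarly for $\mathfrak{h}$; the inclusion becomes, up to units, the map sending each variable to $\pi^{a_i}$ times the corresponding variable. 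One then checks that $\operatorname{gr} U(\mathfrak{g})$ is flat over $\operatorname{gr} U(\mathfrak{h})$ — indeed free, by the same adapted-basis monomial argument, since $\operatorname{gr}\mathcal{O}_K$ is a (graded) valuation-type ring over which $\pi^{a_i}$ is a nonzerodivisor. Feeding this into \cite[Proposition 5.3.10]{Em} yields that $\widehat{U(\mathfrak{g})}_K$ is flat over $\widehat{U(\mathfrak{h})}_K$; running the argument with right modules (again legitimate because $[\mathfrak{g},\mathfrak{h}]\subseteq\mathfrak{h}$ makes the opposite algebras fit the same pattern) gives the right-flatness.

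The main obstacle I anticipate is the bookkeeping around the filtrations: one must choose the filtration on $U(\mathfrak{h})$ compatibly with the one on $U(\mathfrak{g})$ so that the inclusion is filtered and induces the expected map on associated gradeds, and one must verify that the completion of $U(\mathfrak{h})$ with respect to the \emph{restricted} filtration really is $\widehat{U(\mathfrak{h})}_K$ and not some other completion — this is where the equal-rank hypothesis is used, to ensure the restricted filtration is cofinal with the intrinsic $p$-adic one up to a bounded shift. Once the gradeds are correctly identified, the flatness on the graded level is a routine monomial/free-module computation, and Emerton's machine does the rest; verifying the hypotheses of that proposition (Noetherianness of the gradeds, the relevant finiteness) is straightforward here since everything in sight is visibly Noetherian.
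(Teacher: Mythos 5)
Your proposal cites the right tool (Emerton's Proposition 5.3.10, i.e.\ Berthelot's technique), and the opposite-algebra trick for deducing left flatness from right flatness is exactly what the paper does. But the heart of the plan rests on a false algebraic claim and on a misreading of what Emerton's criterion requires.

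The false claim is that ``$U(\mathfrak{g})$ is free as a left and right $U(\mathfrak{h})$-module'' (you even say ``flat, indeed free, before completing''). This is not true when $\mathfrak{h}$ and $\mathfrak{g}$ have the same rank. Take the simplest case $\mathfrak{h}=p\mathfrak{g}$ with $\mathfrak{g}$ abelian of rank $1$, so $U(\mathfrak{g})=\mathcal{O}_K[x]$ and $U(\mathfrak{h})=\mathcal{O}_K[y]$ with $y\mapsto px$. Computing $\operatorname{Tor}_1^{\mathcal{O}_K[y]}\bigl(\mathcal{O}_K[x],\,\mathcal{O}_K[y]/(\pi,y)\bigr)$ via the Koszul resolution $0\to \mathcal{O}_K[y]\to \mathcal{O}_K[y]^2\to \mathcal{O}_K[y]\to \mathcal{O}_K[y]/(\pi,y)\to 0$ gives a nonzero answer (isomorphic to $\mathcal{O}_K[x]/\pi$), so $U(\mathfrak{g})$ is not even flat over $U(\mathfrak{h})$, let alone free. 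In your elementary-divisor picture the monomials $x_1^{b_1}\cdots x_d^{b_d}$ are $U(\mathfrak{h})$-linearly dependent: already for $d=1$ one has $p\cdot x = (px)\cdot 1$. The ``reduced appropriately'' hedge cannot be made to work. The whole point of the Berthelot/Emerton result is that flatness appears only after $p$-adic completion, and indeed the case $\mathfrak{h}=p\mathfrak{g}$ is precisely the one treated in Schmidt--Strauch that the paper builds on.

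This ties into the second problem: Emerton's criterion does not ask that the uncompleted map be flat, and it does not ask that the associated graded map be flat. What it asks for is an exhaustive multiplicative filtration $F'_\bullet$ on $B=U(\mathfrak{g})$ with $F'_0B=A=U(\mathfrak{h})$ and with $\operatorname{Gr}^{F'}_\bullet B$ finitely generated over $A$ by \emph{central} elements. The paper produces this filtration as $F'_iB = A\cdot F_iB$ (with $F_\bullet B$ the PBW filtration), and the hypothesis $[\mathfrak{g},\mathfrak{h}]\subseteq\mathfrak{h}$ is used exactly to show that the symbols of generators of $\mathfrak{g}$ commute in $\operatorname{Gr}^{F'}_\bullet B$ with $\operatorname{Gr}^{F'}_0B=A$. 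Your plan, as written, checks neither the freeness step (which is false) nor the centrality/finite-generation step (which is the real content); so there is a genuine gap and the approach as described would not close it.
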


\begin{proof}

The proof is very close to that of \cite[Proposition 3.4]{SS}, which
contains the case $\mathfrak{h}=p\mathfrak{g}$ as a special case.
First we prove right flatness. Put $A=U(\mathfrak{h})$ and $B=U(\mathfrak{g})$
and let $F_{\bullet}A$ resp. $F_{\bullet}B$ denote the usual Poincar\'e-Birkhoff-Witt
filtrations on $A$ and $B$ respectively. $A$ and $B$ are well known to be
$p$-torsion-free and $p$-adically separated left Noetherian $\Zp$-algebras.
Define a new increasing filtration on $B$ by the $\Zp$-submodules

\[
F_{i}^{\prime}B=A\cdot F_{i}B = \sum_{j\leq i} A \fr{g}^j.
\]

In order to apply \cite[Proposition 5.3.10]{Em}, we need to verify
conditions (i)-(iii) in the statement of \cite[Lemma 5.3.9]{Em}.
Condition (ii), that $F_{0}^{\prime}B=A$, is clear. Condition (i)
asks that $F_{i}^{\prime}B\cdot F_{j}^{\prime}B\subseteq F_{i+j}^{\prime}B$. 
To see this, note that because $\fr{g}$ normalises $\fr{h}$, we have $[\fr{g}, A] \subseteq A$, so $\fr{g} A \subseteq A \fr{g} + A$. Hence $\fr{g}^i A \subset F_i^\prime B$ by an easy induction, so $A \fr{g}^i A \fr{g}^j \subset F_{i+j}^\prime B$ as required. Condition (iii) asks that $Gr_{\bullet}^{F^{\prime}}B$
is finitely generated over $A$ by central elements. Since $F_{\bullet}B\subseteq F_{\bullet}^{\prime}B$,
the identity map induces a natural map $f\,:\, Gr_{\bullet}^{F}B\rightarrow Gr_{\bullet}^{F^{\prime}}B$
and since $Gr_{0}^{F^{\prime}}B=A$ we see that $Gr_{\bullet}^{F^{\prime}}B$
is generated (as a ring) by $f(Gr_{\bullet}^{F}B)$ and $A$. Now
$Gr_{\bullet}^{F}B=\Sym(\mathfrak{g})$, thus we may find $X_{1},...,X_{d}\in\mathfrak{g}$
whose symbols $gr^{F^{\prime}}X_{k}\in Gr_{1}^{F^{\prime}}B$ generate
$Gr_{\bullet}^{F^{\prime}}B$ over $A$. Thus we have shown finite
generation and to show that the $gr^{F^{\prime}}X_{k}$ are central
it suffices to prove that for any $X\in\mathfrak{g}$, $gr^{F^{\prime}}X$
commutes with $A$ in $Gr_{\bullet}^{F^{\prime}}B$. Since $A$ is
generated by $\mathfrak{h}$ it suffices verify this for $Y\in\mathfrak{h}\subseteq Gr_{0}^{F^{\prime}}B$.
Computing, we see that $(gr^{F^\prime}\, X)(gr^{F^\prime}\, Y)-(gr^{F^\prime}\, Y)(gr^{F^\prime}\, X)$ is equal
to the image of the bracket $[X,Y]$ inside $Gr_{1}^{F^{\prime}}B$.
By assumption $[X,Y]\in\mathfrak{h}$, so it becomes $0$ in $Gr_{1}^{F^{\prime}}B$.
Having verified the assumptions of \cite[Proposition 5.3.10]{Em}
we may conclude that $\widehat{B}_K$ is a flat right $\widehat{A}_K$-module.

To prove left flatness we remark that it is well known that $U(\mathfrak{h})^{\op}\rightarrow U(\mathfrak{g})^{\op}$
is canonically isomorphic to $U(\mathfrak{h}^{\op})\rightarrow U(\mathfrak{g}^{\op})$,
where we recall that the opposite Lie algebra $\mathfrak{h}^{\op}$
of $\mathfrak{h}$ is the $\mathcal{O}_{K}$-module $\mathfrak{h}$
together with a new bracket $[-,-]^{\prime}$ defined by $[X,Y]^{\prime}=[Y,X]$.
Thus, having proved right flatness of $U(\mathfrak{h}^{\op})\rightarrow U(\mathfrak{g}^{\op})$
above we may deduce left flatness of $U(\mathfrak{h})\rightarrow U(\mathfrak{g})$.
\end{proof}

Next let us recall the notion of a crossed product from e.g. \cite[\S 1]{Pa}. Let $S$ be a ring and let $H$ be a group. A \emph{crossed product}
$S\ast H$ is a ring containing $S$ as a subring and a subset $\overline{H}=\{\bar{h}\mid h\in H\}$
of units bijective with $H$, that satisfies the following conditions:

\begin{itemize}

\item $S\ast H$ is a free right $S$-module with basis $\overline{H}$.

\item We have $\bar{h}S=S\bar{h}$ and $\overline{gh}S=\bar{g}\bar{h}S$
for all $g,h\in H$.
\end{itemize}

Given a crossed product $S\ast H$ we obtain functions $\sigma\,:\, H\rightarrow \Aut(S)$
and $\tau\,:\, H\times H\rightarrow S^{\times}$, called the action
and the twisting, defined by 
\[
\sigma(h)(s):=(\bar{h})^{-1}s\bar{h},
\]
\[
\tau(g,h)=(\overline{gh})^{-1}\bar{g}\bar{h}.
\]

The associativity of $S\ast H$ is equivalent to certain relations between
$\sigma$ and $\tau$ (\cite[Lemma 1.1]{Pa} ) and conversely, given
$S$, $H$, $\sigma$ and $\tau$ satisfying these relations we may
construct $S\ast H$ as the free right $S$-module on a set $\overline{H} = \{\bar{h} : h\in H\}$ which is in bijection with $H$, and multiplication being defined by extending the rule 
\[
(\bar{g} r)(\bar{h} s)= \overline{gh}  \tau(g,h) r^{\sigma(h)}s
\]
additively ($g,h\in H$, $r,s\in S$). Here we use the notation $s^{f}$ to denote the (right) action of $f\in \Aut(S)$ on $s\in S$. Let us record how crossed products
behave with respect to taking opposites.

\begin{lem}

\label{lem: crossed}Let $S\ast H$ be a crossed product of ring $S$
by a group $H$, with action $\sigma\,:\, H\rightarrow \Aut(S)$ and
twisting $\tau\,:\, H\times H\rightarrow S^{\times}$.

1) $(S\ast H)^{\op}=S^{\op}\ast H^{\op}$, with action $\sigma^{\op}$
and twisting $\tau^{\op}$ on the right hand side given by 
\[
\sigma^{\op}(h)=\sigma(h)^{-1},
\]
\[
\tau^{\op}(g,h)=\tau(h,g)^{\sigma(g)^{-1}\sigma(h)^{-1}}.
\]

2) Let $T$ be another ring with a homomorphism $\phi\,:\, S\rightarrow T$ and let $\Gamma\subseteq Aut(S)$ be a subgroup of the automorphisms of $S$ that contains the inner automorphisms defined by $\tau(g,h)$ for $g,h\in H$ and the $\sigma(h)$ for $h\in H$. We assume that there is a compatible homomorphism $\psi\,:\, \Gamma \rightarrow \Aut(T)$, where by compatible, we mean that for $f\in \Gamma$, $\phi\circ f=\psi(f)\circ\phi$, and additionally that $\psi$ sends the inner automorphism defined by $\tau(g,h)\in S^{\times}$ to the inner automorphism defined by $\phi(\tau(g,h))$ for all $g,h\in H$. Then we may form a crossed
product $T\ast H$ with action $\psi\circ\sigma$ and twisting $\phi\circ\tau$.
Moreover $\phi$ extends naturally to a homomorphism $\Phi\,:\, S\ast H\rightarrow T\ast H$.

3) The constructions in 1) and 2) are compatible: Given the situation in 2), we also have $\phi\, : S^{\op} \rightarrow T^{\op}$ and $S^{\op} \ast H^{\op}$; we may form $T^{\op} \ast H^{\op}$ and we obtain a natural map $S^{\op} \ast H^{\op} \rightarrow T^{\op} \ast H^{\op}$. Under the identification in 1) this agrees with $(T\ast H)^{\op}$ and $\Phi$.

\end{lem}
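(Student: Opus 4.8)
The plan is to treat the three parts in order; the statement is essentially a bookkeeping exercise with the defining relations of a crossed product, so I will mainly indicate which identities need checking and flag the one step that is not purely formal. For 1), take the distinguished units of $(S\ast H)^{\op}$ to be literally the elements $\bar h$, now regarded as indexed by the underlying set of $H^{\op}$. Writing $a\bullet b:=ba$ for the multiplication of $(S\ast H)^{\op}$, the decomposition $S\ast H=\bigoplus_{h}\bar h S=\bigoplus_{h}S\bar h$ (valid since $\bar h S=S\bar h$) says precisely that $(S\ast H)^{\op}$ is free as a right $S^{\op}$-module on $\{\bar h\}$, and the identities $\bar h\,S^{\op}=S^{\op}\,\bar h$ and $\overline{hg}\,S^{\op}=\bar g\bullet\bar h\bullet S^{\op}$ in $(S\ast H)^{\op}$ unwind to the corresponding identities in $S\ast H$ with $g,h$ interchanged, which hold by hypothesis. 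Thus $(S\ast H)^{\op}$ is a crossed product $S^{\op}\ast H^{\op}$, and it remains to read off the action and twisting. Since a ring automorphism of $S$ is the same datum as a ring automorphism of $S^{\op}$, the computation $\sigma^{\op}(h)(s)=\bar h^{-1}\bullet s\bullet\bar h=\bar h\,s\,\bar h^{-1}=\sigma(h)^{-1}(s)$ is meaningful and gives the first formula, while $\tau^{\op}(g,h)=(\overline{hg})^{-1}\bullet\bar g\bullet\bar h=\bar h\bar g\,(\overline{hg})^{-1}=\overline{hg}\,\tau(h,g)\,(\overline{hg})^{-1}=\sigma(hg)^{-1}(\tau(h,g))$. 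The only point requiring thought is to identify this last expression with $\tau(h,g)^{\sigma(g)^{-1}\sigma(h)^{-1}}$: I would use the crossed-product identity $\sigma(g)\circ\sigma(h)=c_{\tau(h,g)^{-1}}\circ\sigma(hg)$ (with $c_{a}$ conjugation by $a$), which rearranges to $\sigma(hg)^{-1}=\sigma(h)^{-1}\circ\sigma(g)^{-1}\circ c_{\tau(h,g)^{-1}}$, together with the observation that conjugation by $\tau(h,g)^{-1}$ fixes $\tau(h,g)$, and the right-action convention $x^{fg}=(x^{f})^{g}$.

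For 2), I would invoke the converse part of the description of crossed products recalled before the lemma: it is enough to verify that $(\psi\circ\sigma,\phi\circ\tau)$ over $T$ satisfies the same two relations that $(\sigma,\tau)$ does, namely the cocycle identity $\tau(g,hk)\,\tau(h,k)=\tau(gh,k)\,\tau(g,h)^{\sigma(k)}$ and the conjugation identity $\sigma(g)\circ\sigma(h)=c_{\tau(h,g)^{-1}}\circ\sigma(hg)$. The cocycle identity transports by applying the ring homomorphism $\phi$ and using $\phi\circ\sigma(k)=\psi(\sigma(k))\circ\phi$. The conjugation identity is an identity in $\Gamma$ — all of $\sigma(g),\sigma(h),\sigma(hg),c_{\tau(h,g)^{\pm1}}$ lie in $\Gamma$ by hypothesis — so applying the group homomorphism $\psi$ and using that $\psi$ sends the inner automorphism of $S$ by $\tau(h,g)$ to that of $T$ by $\phi(\tau(h,g))$ yields the identity over $T$. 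Once $T\ast H$ exists, the map $\Phi\colon\sum_{h}\bar h\,s_h\mapsto\sum_{h}\bar h\,\phi(s_h)$ is checked to respect the rule $(\bar g r)(\bar h s)=\overline{gh}\,\tau(g,h)\,r^{\sigma(h)}\,s$ in one line, again via $\phi\circ\sigma(h)=\psi(\sigma(h))\circ\phi$, and it restricts to $\phi$ on $S$ by construction.

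For 3), I would feed the data of 2) through 1). Applying 1) to $S\ast H$ and to $T\ast H$ exhibits their opposites as $S^{\op}\ast H^{\op}$ and $T^{\op}\ast H^{\op}$ with actions $\sigma^{\op},(\psi\sigma)^{\op}$ and twistings $\tau^{\op},(\phi\tau)^{\op}$. I then check that 2) applies to $\phi\colon S^{\op}\to T^{\op}$ with the same $\Gamma$: here $\Aut(S^{\op})=\Aut(S)$, the units of $S^{\op}$ are those of $S$, and the inner automorphism of $S^{\op}$ by a unit $a$ is the inverse of conjugation by $a$ in $S$, so, using the formula for $\tau^{\op}$ from 1) and the identity $c_{\alpha(a)}=\alpha\circ c_{a}\circ\alpha^{-1}$, the relevant inner automorphisms and the $\sigma^{\op}(h)=\sigma(h)^{-1}$ again land in $\Gamma$, while the compatibility conditions for $\psi$ carry over verbatim. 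Part 2) then produces $T^{\op}\ast H^{\op}$ with action $\psi\circ\sigma^{\op}=(\psi\sigma)^{\op}$ and twisting $\phi\circ\tau^{\op}=(\phi\tau)^{\op}$ — these last two equalities follow at once from the formulas in 1) and the compatibility of $\psi$ — so this crossed product is the one that 1) attaches to $T\ast H$, and comparing basis elements identifies the resulting homomorphism $S^{\op}\ast H^{\op}\to T^{\op}\ast H^{\op}$ with $\Phi$ regarded as a map of opposite rings. The main obstacle throughout is not any individual step but keeping the conventions (left versus right actions, the order of composition of automorphisms, and the fact that $\sigma$ is only a homomorphism up to conjugation) consistent; the one genuinely non-formal ingredient is the cancellation $c_{\tau(h,g)^{-1}}(\tau(h,g))=\tau(h,g)$ used in part 1).
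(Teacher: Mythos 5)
Your proof is correct and follows essentially the same route as the paper's: verify the crossed-product axioms for $(S\ast H)^{\op}$ directly and read off $\sigma^{\op},\tau^{\op}$; transport the two defining identities of \cite[Lemma 1.1]{Pa} along $\phi$ and $\psi$ to build $T\ast H$; and match actions and twistings to prove compatibility. The one point where you are a bit more careful than the paper is in part 3, where you explicitly note that the hypotheses of part 2 must be re-verified for $\phi\colon S^{\op}\to T^{\op}$ (i.e.\ that $\sigma^{\op}(h)$ and the inner automorphisms by $\tau^{\op}(g,h)$ still lie in $\Gamma$ and that $\psi$ remains compatible); the paper leaves this implicit, so your elaboration is a welcome, if minor, improvement.
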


\begin{proof}

1) $(S\ast H)^{\op}$ contains $S^{\op}$ and as a subring as well as the set $\overline{H}$ which is bijective with $H^{\op}$.  To verify that $(S\ast H)^{\op}$ is a crossed product $S^{\op} \ast H^{\op}$ it remains to verify that, working in $S \ast H$, $S\bar{h}$ is a free left $S$-module and that $S\overline{gh}=(S\bar{g})\bar{h}$ for all $g,h\in H$. The first assertion follows from $s\bar{h}=\bar{h}s^{\sigma(h)}$ and that $\bar{h}S$ is a free right $S$-module and $\sigma(h)$ is an automorphism of $S$. The second follows similarly using the formula
$$ \bar{g}\bar{h} = \tau(g,h)^{\sigma(gh)^{-1}}\overline{gh}. $$
We may then compute the formulae for $\sigma^{\op}$ and $\tau^{\op}$, using $\cdot$ to distinguish the multiplication in $(S\ast H)^{\op}$ or $H^{\op}$ from that in $S\ast H$ or $H$:
$$s^{\sigma^{\op}(h)}=(\bar{h})^{-1} \cdot s \cdot \bar{h} = \bar{h}s(\bar{h})^{-1} =s^{\sigma(h)^{-1}}; $$
$$ \tau^{\op}(g,h)= (\overline{g\cdot h})^{-1}\cdot \bar{g} \cdot \bar{h}=\bar{h}\bar{g}(\overline{hg})^{-1}= \tau(h,g)^{\sigma(g)^{-1}\sigma(h)^{-1}}. $$
This finishes the proof of (1). We now prove (2). First, it is straightforward to verify that $\psi \circ \sigma$ and $\phi \circ \tau$ satisfy  \cite[Lemma 1.1]{Pa} using the compatibility condition. Thus we may form $T\ast H$ as above, and $\phi$ induces an additive group homomorphism $\Phi\, :\, S\ast H \rightarrow T\ast H$ given by $\Phi(\bar{h}s)=\bar{h}\phi(s)$ which is easily checked to be a ring homomorphism.  Finally, to check 3) we note that
$$ (\psi \circ \sigma)^{\op}(h)=\psi(\sigma(h))^{-1}=\psi(\sigma(h)^{-1})=\psi(\sigma^{\op}(h))$$
and
$$(\phi \circ \tau)^{\op}(g,h)= \phi(\tau(h,g))^{\psi(\sigma(g))^{-1}\psi(\sigma(h))^{-1}}= \phi(\tau(h,g))^{\psi(\sigma(g)^{-1}\sigma(h)^{-1})}= $$
$$=\phi\left( \tau(h,g)^{\sigma(g)^{-1}\sigma(h)^{-1}}\right)=\phi(\tau^{\op}(g,h)). $$
This shows that $(T\ast H)^{\op} =T^{\op} \ast H^{\op}$, and checking that the maps agree is another straightforward computation.
\end{proof}

\begin{rem}

\label{AB}This Lemma implies that the natural left module analogue
of \cite[Lemma 5.4]{AB}  holds (this may of course also be proved directly).
References to \cite[Lemma 5.4]{AB} below will often implicitly be
to its left module analogue.
\end{rem}

\section{Canonical Dimensions for Iwasawa algebras\label{sec: main}}

Let us first fix some notation and terminology. As in the previous
section we let $K$ be a finite field extension of $\Qp$.
In this section we let $G$ denote a compact $p$-adic analytic group.
For the definition, see \cite[\S 8]{DDMS} ; note that for us a $p$-adic
analytic group is the same as a $\Qp$-analytic group.
Put $d=\dim\, G$ (this is the dimension of $G$ as a $\Qp$-analytic
group, so e.g. $\dim\, SL_{2}(K)=3[K:\Qp]$). As in \cite[\S 9.5]{DDMS}
 we let $\mathcal{L}(G)$ denote the Lie algebra of $G$; this
is a $\Qp$-Lie algebra of dimension $d$. We let $KG$
denote the Iwasawa algebra of $G$ with coefficients in $K$, defined
as $KG=\left(\underset{\longleftarrow}{\lim}\, \mathcal{O}_{K}[G/N]\right)\otimes_{\mathcal{O}_{K}}K$,
where the inverse limit runs through all open normal subgroups $N\subseteq G$.
$KG$ is Auslander-Gorenstein with self-injective dimension $d$ (see
\cite[Definition 2.5]{AW1}  for this notion). This follows for example
from \cite[Lemma 5.4]{AB} , \cite[Lemma 10.13]{AW1}  and the existence
of a uniform normal open subgroup of $G$ (see below for this notion).
For any Auslander-Gorenstein ring $A$, we define the grade $j_{A}(M)$
and canonical dimension $d_{A}(M)$ of any nonzero finitely generated
$A$-module $M$ by
\[
j_{A}(M)= \min\,\left\{ j\mid \Ext_{A}^{j}(M,A)\neq0\right\} ,
\]
\[
d_{A}(M)={\rm inj}\, {\rm dim}_A A-j_{A}(M).
\]

See \cite[\S 2.5]{AW1}  for more definitions and details. We will
also use the notion of a \emph{uniform pro-$p$ group}, for which we refer
to \cite[\S 4 and \S 8.3]{DDMS}. A uniform pro-$p$ group $H$ has
an associated $\Zp$-Lie algebra $L_{H}$ --- see \cite[\S 4.5]{DDMS}
 --- which is free of rank $\dim\, H$. We also define $\mathcal{L}(H)=L_{H}\otimes_{\Zp}\Qp$.
The remainder of this section will be devoted to the proof of Theorem
\ref{thm: main}. First, we record a simple reduction:

\begin{lem}

It suffices to prove Theorem \ref{thm: main} for uniform pro-$p$
groups and for $K=\Qp$.
\end{lem}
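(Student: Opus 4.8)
The plan is to establish two independent reductions: first from arbitrary compact $p$-adic analytic $G$ to the case where $G$ is uniform pro-$p$, and then from arbitrary finite extensions $K/\Qp$ to the case $K=\Qp$. For the first reduction, one uses that every compact $p$-adic analytic group $G$ contains an open normal uniform pro-$p$ subgroup $H$ (see \cite[\S 8.3]{DDMS}); then $KG$ is a crossed product $KH\ast (G/H)$, with $G/H$ a finite group. Restriction along $KH\hookrightarrow KG$ sends a finitely generated $KG$-module $M$ to a finitely generated $KH$-module $M|_H$, and $M|_H$ is infinite-dimensional over $K$ precisely when $M$ is (since $[G:H]<\infty$). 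The key point is that canonical dimension is insensitive to this operation: by \cite[Lemma 5.4]{AB} (or its left-module analogue, cf. Remark \ref{AB}) applied to the crossed product $KG = KH\ast (G/H)$, one has $j_{KG}(M)=j_{KH}(M|_H)$, and since $KG$ and $KH$ have the same injective dimension $d=\dim G=\dim H$, also $d_{KG}(M)=d_{KH}(M|_H)$. Moreover $H$ and $G$ have the same Lie algebra up to the identification $\mc{L}(H)=L_H\otimes_{\Zp}\Qp \cong \mc{L}(G)$, hence the same root system over $\overline{\mathbb{Q}}_p$ and the same complex group $G_{\mathbb{C}}$, so $r$ is unchanged; the very-good-prime hypothesis also transfers since it depends only on the root datum. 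Thus proving $d_{KH}(M|_H)\geq r$ gives $d_{KG}(M)\geq r$.

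For the second reduction, suppose $G$ is uniform pro-$p$ and $K/\Qp$ is a finite extension; I want to descend to $\Qp$. The natural move is to view a finitely generated $KG$-module $M$ as a module over $\Qp G$ via the inclusion $\Qp G\hookrightarrow KG$. One has $KG \cong K\otimes_{\Qp}\Qp G$, which is a free $\Qp G$-module of finite rank $[K:\Qp]$ (indeed $\mc{O}_K$ is free of finite rank over $\Zp$, and $\mc{O}_K\, G = \mc{O}_K\otimes_{\Zp}\Zp G$). Hence $M$, viewed over $\Qp G$, is still finitely generated, and it is infinite-dimensional over $\Qp$ iff it is infinite-dimensional over $K$. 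Since $KG$ is free, hence faithfully flat, over $\Qp G$, the change-of-rings spectral sequence degenerates (or one argues directly) to give $\Ext^j_{KG}(M,KG) \cong K\otimes_{\Qp}\Ext^j_{\Qp G}(M,\Qp G)$ as a $\Qp$-module — more precisely, $\Ext^j_{\Qp G}(M, \Qp G)\otimes_{\Qp}K \cong \Ext^j_{\Qp G}(M, KG) \cong \Ext^j_{KG}(KG\otimes_{\Qp G}M, KG)$, and when one is careful about which base ring one computes over, the grade is preserved: $j_{\Qp G}(M) = j_{KG}(M)$. As $\dim_{\Qp G}G = \dim_{KG}G = d$ (both equal $\dim G$), we get $d_{\Qp G}(M) = d_{KG}(M)$. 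Finally, the root system of $\mc{L}(G)\otimes_{\Qp}\overline{\mathbb{Q}}_p$ is exactly the same whether we start from $\mc{L}(G)$ as a $\Qp$-Lie algebra or regard $KG$ over $K$, so $G_{\mathbb{C}}$ and $r$ are literally unchanged, and the very-good-prime condition is a condition on $G$ (equivalently on this root system) that does not mention $K$. Therefore it suffices to prove the bound $d_{\Qp G}(M)\geq r$.

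Combining the two reductions in either order yields the lemma: given the general statement, pass first to an open uniform normal subgroup $H\trianglelefteq G$ to reduce to uniform pro-$p$ groups without changing $d(M)$, $r$, or the hypotheses, and then restrict scalars from $K$ to $\Qp$ to reduce to $K=\Qp$, again without changing the relevant invariants. I expect the main obstacle to be the careful bookkeeping in the scalar-restriction step: one must verify that the grade $j_A(M)$, which is defined via $\Ext^{\bullet}_A(M,A)$ with $A$ on both sides, is genuinely preserved when passing between $A = KG$ and $A = \Qp G$ — this requires knowing that $KG$ is \emph{both} a flat (even free) $\Qp G$-module \emph{and} has the correct behaviour on Ext, which is exactly the content of the left-module analogue of \cite[Lemma 5.4]{AB} applied to the (trivial) crossed-product decomposition, or can be seen directly from $KG = K\otimes_{\Qp}\Qp G$ with $K/\Qp$ separable. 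The uniform-pro-$p$ reduction is more routine, being the standard crossed-product argument already implicit in the cited literature; the only subtlety there is checking that infinite-dimensionality over $K$ is preserved under restriction to the finite-index subgroup, which is immediate.
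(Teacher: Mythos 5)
Your first reduction (pass to an open normal uniform pro-$p$ subgroup $H$, use $KG = KH\ast(G/H)$ and \cite[Lemma 5.4]{AB} to get $d_{KG}(M)=d_{KH}(M|_H)$, note $\mathcal{L}(G)=\mathcal{L}(H)$ so $r$ is unchanged) is exactly the paper's argument and is fine.

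The second reduction is where there is a genuine slip, even though you flagged that this step needs care. The paper simply cites \cite[Lemma 2.6]{AW1} for the reduction to $K=\Qp$; you instead try to prove it from scratch, and your chain of isomorphisms does not compute the quantity you need. What you write is
\[
\Ext^j_{\Qp G}(M,\Qp G)\otimes_{\Qp}K \;\cong\; \Ext^j_{\Qp G}(M,KG)\;\cong\; \Ext^j_{KG}\bigl(KG\otimes_{\Qp G}M,\,KG\bigr),
\]
which uses the \emph{induction} adjunction ($\Ext^\bullet_{KG}(KG\otimes_{\Qp G}-,\,N)\cong\Ext^\bullet_{\Qp G}(-,N)$, valid since $KG$ is flat over $\Qp G$). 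But $KG\otimes_{\Qp G}M \cong (K\otimes_{\Qp}K)\otimes_K M$ is \emph{not} $M$ — it is $[K:\Qp]$ copies of twists of $M$ — so you have computed $j_{KG}(KG\otimes_{\Qp G}M)$, not $j_{KG}(M)$. Getting from one to the other requires an argument that all the summands of $KG\otimes_{\Qp G}M$ have the same grade, which you do not supply; without it, being a direct summand only gives an inequality in the unhelpful direction. Also, your displayed claim $\Ext^j_{KG}(M,KG)\cong K\otimes_{\Qp}\Ext^j_{\Qp G}(M,\Qp G)$ is false as stated: the two sides have $\Qp$-dimensions differing by the factor $[K:\Qp]$, since the correct isomorphism is $\Ext^j_{KG}(M,KG)\cong\Ext^j_{\Qp G}(M,\Qp G)$. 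Finally, the fallback appeal to \cite[Lemma 5.4]{AB} applied to a ``(trivial) crossed-product decomposition'' does not work: $KG$ is not a crossed product of $\Qp G$ by a group in the sense of that lemma, so it is not applicable.

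The clean way to finish (and presumably the content of \cite[Lemma 2.6]{AW1}) is the \emph{coinduction} adjunction combined with the fact that $K/\Qp$ is a Frobenius extension: since $K\cong\Hom_{\Qp}(K,\Qp)$ as a $K$-bimodule (trace form), one gets a $KG$--$\Qp G$-bimodule isomorphism $KG\cong\Hom_{\Qp G}(KG,\Qp G)$, and then for a finitely generated $KG$-module $M$,
\[
\Ext^j_{KG}(M,KG)\;\cong\;\Ext^j_{KG}\bigl(M,\Hom_{\Qp G}(KG,\Qp G)\bigr)\;\cong\;\Ext^j_{\Qp G}(M,\Qp G),
\]
the last step because $KG$ is finitely generated projective over $\Qp G$. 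This gives $j_{KG}(M)=j_{\Qp G}(M)$ directly, hence $d_{KG}(M)=d_{\Qp G}(M)$. With this replacing your Ext chain, the rest of your write-up (finiteness and infinite-dimensionality preserved under restriction, $r$ and the very-good-prime condition unaffected) is correct and the reduction goes through.
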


\begin{proof}

Pick a uniform normal open subgroup $H$ of $G$. Let $M$ be a finitely
generated $KG$-module. Then $\mathcal{L}(G)=\mathcal{L}(H)$, so
$r_{G}=r_{H}$, and \cite[Lemma 5.4]{AB}  implies that $d_{KG}(M)=d_{KH}(M)$.
Hence we may reduce to uniform pro-$p$ groups. Then \cite[Lemma
2.6]{AW1}  implies that it suffices to prove it for $K=\Qp$.
\end{proof}

Before we proceed we need to recall the rings $\mathcal{U}_{n}\ast H_{n}$,
introduced in \cite[\S 10.6]{AW1} . From now on let us assume that
$G$ is uniform and $K=\Qp$. We may then set $\mathfrak{h}=p^{-1}L_{G}$;
since $L_{G}$ satisfies $[L_{G},L_{G}]\subseteq pL_{G}$, $\mathfrak{h}$
is a $\Zp$-sub Lie algebra of $\mathcal{L}(G)$, finite
free of rank $d$. Recall that, for all $n\in\mathbb{Z}_{\geq0}$,
the groups $G^{p^{n}}$ are normal uniform pro-$p$ subgroups of $G$ with $L_{G^{p^{n}}}=p^{n}L_{G}$
and we set $H_{n}=G/G^{p^{n}}$. In \cite[ \S 10.6]{AW1}, $\mathcal{U}_{n}$
is defined to be the microlocalisation of $\Zp G^{p^{n}}$ with respect
to the microlocal Ore set $S_{n}=\bigcup_{a\geq0}p^{a}+\mathfrak{m}_{n}^{a+1}$,
where $\mathfrak{m}_{n}$ is the unique (left and right) maximal ideal
of $\Zp G^{p^{n}}$. If we need to emphasize the group $G$ we will write
$\mathcal{U}_{n}(G)$. By \cite[Theorem 10.4]{AW1},  $\mathcal{U}_{n}$
is isomorphic to $\widehat{U(p^{n}\mathfrak{h})}_{\Qp}$.
$\Zp G$ is a crossed product $\Zp G^{p^{n}}\ast H_{n}$ and since $S_{n}$
is invariant under automorphisms we obtain a canonical homomorphism
$\Aut(\Zp G^{p^{n}})\rightarrow \Aut(\mathcal{U}_{n})$. Note that if $f_{1},f_{2}\in \Aut(\mathcal{U}_{n})$
come from $\Aut(\Zp G^{p^{n}})$ and agree on $\Zp G^{p^{n}}$ then they
are equal (this is immediate from the construction). Thus we may form
$\mathcal{U}_{n}\ast H_{n}$ by Lemma \ref{lem: crossed}(2) and we
get an induced homomorphism $\Zp G\rightarrow\mathcal{U}_{n}\ast H_{n}$. Since $p$ is invertible in $\mathcal{U}_{n}\ast H_{n}$, we have a homomorphism $\Qp G\rightarrow\mathcal{U}_{n}\ast H_{n}$.
We will need some left/right complements to various results in \cite{AW1}:

\begin{prop}

\label{cor: roundup}1) The natural map $\Qp G\rightarrow\mathcal{U}_{n}\ast H_{n}$
is left and right flat. 

2) If $M$ is a finitely generated left $\Qp G$-module, then $(\mathcal{U}_{n}\ast H_{n})\otimes_{\Qp G}M=0$
if and only if $M$ is $S_{n}$-torsion.

3) If $M$ is a finitely generated right $\Qp G$-module, then $M\otimes_{\Qp G}(\mathcal{U}_{n}\ast H_{n})=0$
if and only if $M$ is $S_{n}$-torsion.

4) If $M$ is a finitely generated $p$-torsion free left or right
$\Zp G$-module, then there exists an $n_{0}\in\mathbb{Z}_{\ge0}$ such
that $M$ is $S_{n}$-torsion-free for all $n\geq n_{0}$.

\end{prop}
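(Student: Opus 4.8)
The plan is to deduce all four statements from facts about the microlocalisation map $\phi\colon\Qp G^{p^{n}}\to\mathcal{U}_{n}$ established in \cite{AW1}, transported along the two operations of Section~\ref{sec: flatness}: forming a crossed product by $H_{n}$, and passing to opposite rings. Throughout we use the factorisation
\[
\Qp G = \Qp G^{p^{n}}\ast H_{n}\xrightarrow{\ \Phi\ }\mathcal{U}_{n}\ast H_{n}
\]
of the structure map, where $\Phi$ is obtained from $\phi$ via Lemma~\ref{lem: crossed}(2), together with the fact that every element of $S_{n}$ is already a unit in $\mathcal{U}_{n}$, hence in $\mathcal{U}_{n}\ast H_{n}$.

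For (1): $\phi$ is left and right flat, microlocalisation being a flat extension; feeding this into \cite[Lemma~5.4]{AB} for right modules and into its left analogue (Remark~\ref{AB}), applied to the compatible data of Lemma~\ref{lem: crossed}(2) that defines $\mathcal{U}_{n}\ast H_{n}$, shows that $\Phi$ is left and right flat and yields a base-change isomorphism $(\mathcal{U}_{n}\ast H_{n})\otimes_{\Qp G}M\cong\mathcal{U}_{n}\otimes_{\Qp G^{p^{n}}}M$ of left modules, natural in $M$ (and symmetrically on the right). For (2): since $M$ is a $\Qp$-vector space this right-hand side is $\mathcal{U}_{n}\otimes_{\Zp G^{p^{n}}}M$, which by the properties of the microlocalisation recorded in \cite{AW1} vanishes precisely when $M$ is $S_{n}$-torsion over $\Zp G^{p^{n}}$; as $S_{n}\subseteq\Zp G^{p^{n}}\subseteq\Qp G$ acts the same way before and after restriction, this is the same as $M$ being $S_{n}$-torsion over $\Qp G$. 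Statement (3) is (2) read in the opposite ring: a right $\Qp G$-module is a left $(\Qp G)^{\op}$-module, and by Lemma~\ref{lem: crossed}(1) we have $(\mathcal{U}_{n}\ast H_{n})^{\op}=\mathcal{U}_{n}^{\op}\ast H_{n}^{\op}$, where $\mathcal{U}_{n}^{\op}$ is the microlocalisation of $(\Zp G^{p^{n}})^{\op}$ at $S_{n}$; one checks that $S_{n}$ is stable under the principal anti-automorphism (it fixes each $p^{a}$ and preserves the unique maximal ideal $\mathfrak{m}_{n}$), so it is again a microlocal Ore set of the same kind and (2) applies verbatim. Combining with the identification $B^{\op}\otimes_{A^{\op}}M\cong M\otimes_{A}B$ gives (3).

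For (4), after the same $g\mapsto g^{-1}$ reduction it is enough to treat left modules. Because the maximal ideal of $\Zp G^{p^{n+1}}$ is contracted from that of $\Zp G^{p^{n}}$, one has $\mathfrak{m}_{n+1}\subseteq\mathfrak{m}_{n}$, hence $S_{n+1}\subseteq S_{n}$, so the $S_{n}$-torsion submodules $t_{n}(M)$ form a descending chain and the claim is equivalent to $t_{n_{0}}(M)=0$ for a single $n_{0}$. One ingredient is $\bigcap_{n}t_{n}(M)=0$: a nonzero element of the intersection generates a nonzero finitely generated $\Qp G$-submodule $N\subseteq M[1/p]$ whose generator is annihilated by some $s_{n}\in S_{n}$ for every $n$; as $s_{n}$ is a unit in $\mathcal{U}_{n}\ast H_{n}$, the annihilator of that generator becomes the unit ideal after base change, so $(\mathcal{U}_{n}\ast H_{n})\otimes_{\Qp G}N=0$ for all $n$, whence the coadmissible module $D(G,\Qp)\otimes_{\Qp G}N$ --- being the inverse limit of these --- vanishes, contradicting the faithful flatness of $\Qp G\to D(G,\Qp)$ (\cite[Theorem~5.2]{ST2}). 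The other, deeper ingredient is that this descending chain actually stabilises --- equivalently, given that its intersection vanishes, that some term is already zero. This does not follow from Noetherianity alone and is the step I expect to be the main obstacle: it must exploit the precise shape of $S_{n}$-torsion (in the associated graded of $\Zp G^{p^{n}}$ for the $\mathfrak{m}_{n}$-adic filtration the symbol of any element of $S_{n}$ is a power of that of $p$, which forces the $S_{n}$-torsion of a $p$-torsion-free module to collapse for $n$ large), and it is exactly the analysis carried out in \cite{AW1}, from which (4) follows.
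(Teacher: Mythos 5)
Your treatment of (1)--(3) is essentially the paper's route: cite the relevant results from \cite{AW1} on the left/right side and transport them to the other side via opposite rings, using Lemma \ref{lem: crossed}. The paper is slightly cleaner in that it cites \cite[Proposition 10.6(d),(e)]{AW1} directly for $\Qp G\to\mathcal{U}_n\ast H_n$, rather than first establishing flatness of $\Qp G^{p^n}\to\mathcal{U}_n$ and then lifting it across the crossed product; and it passes to the opposite \emph{group} $G^{\op}$ (so that the whole $\Zp G^{p^n}$, $S_n$, $\mathcal{U}_n$, $H_n$ package is carried along by functoriality), whereas you argue directly with $(\Zp G^{p^n})^{\op}$ and check by hand that $S_n$ is again a microlocal Ore set there. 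Both work; the paper's framing avoids re-verifying the Ore condition.

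For (4) you have a genuine gap, and you acknowledge it yourself: having observed that the $S_n$-torsion submodules $t_n(M)$ form a descending chain with zero intersection, you note that you cannot conclude $t_{n_0}(M)=0$ for some $n_0$, since a Noetherian module can have a strictly descending chain with trivial intersection. You correctly identify that this is the crux, and that it requires the filtered analysis of the maximal ideal $\mathfrak{m}_n$, but you do not supply it. In fact that analysis \emph{is} the content of \cite[Corollary 10.11]{AW1}, which states exactly the left-module half of (4); the paper's proof of (4) consists of citing that corollary and then using the opposite-ring identifications from (1) to get the right-module half. So the missing step is not a subtle lemma you need to prove, but simply a citation you should have made. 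Your auxiliary argument that $\bigcap_n t_n(M)=0$ is also unnecessary once \cite[Corollary 10.11]{AW1} is invoked, and it quietly relies on identifying $\mathcal{U}_n\ast H_n$ with $D_{r_n}(G,\Qp)$ so that $D(G,\Qp)\otimes_{\Qp G}N$ is the inverse limit of the $(\mathcal{U}_n\ast H_n)\otimes_{\Qp G}N$; that identification is \cite[Corollary 5.13]{Sch}, which the paper deliberately keeps out of the proof of Theorem \ref{thm: main} (see the discussion after Theorem \ref{thm: main2} in the introduction). So even the part of (4) you did prove is proved by a detour the paper explicitly chose to avoid.
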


\begin{proof}

1) Right flatness is \cite[Proposition 10.6(d)]{AW1}  and from this
we also get right flatness of $\Qp G^{\op}\rightarrow\mathcal{U}_{n}(G^{\op})\ast H_{n}^{\op}$
(i.e. performing the same constructions for the opposite group). However
we have identifications $\Qp G^{\op}=(\Qp G)^{\op}$ and $\mathcal{U}_{n}(G^{\op})\ast H_{n}^{\op}=(\mathcal{U}_{n}(G)\ast H_{n})^{\op}$
identifying $\Qp G^{\op}\rightarrow\mathcal{U}_{n}(G^{\op})\ast H_{n}^{\op}$
with $(\Qp G)^{\op}\rightarrow(\mathcal{U}_{n}(G)\ast H_{n})^{\op}$ using
Lemma \ref{lem: crossed}, so we get the desired left flatness.

2) is \cite[Proposition 10.6(e)]{AW1}  and 3) follows from 2) applied
to $G^{\op}$ with the identifications in the proof of 1). Similarly
the left part of 4) is \cite[Corollary 10.11]{AW1}  and the right part
follows as above.
\end{proof}

Before we get to the proof of the main theorem we will abstract a
short calculation from the proof:

\begin{lem}

\label{lem: sandwich}Let $A$, $B$, $S$ and $T$ be rings and assume
that we have a commutative diagram
\[
\xymatrix{A\ar[r]\ar[d] & B\ar[d]\\
S\ar[r] & T
}
\]
where $A\rightarrow S$ makes $S$ into a crossed product $A\ast G$
for some finite group $G$. Let $M$ be a finitely generated left
$S$-module and assume that $\Ext_{S}^{i}(M,S)\otimes_{S}T\neq0$ for
some $i$. Then 
\[\Ext_{A}^{i}(M,A)\otimes_{A}B\neq0.\]

\end{lem}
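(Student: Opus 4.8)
The plan is to reduce the assertion to two facts about the crossed product $S=A\ast G$ with $G$ finite: that $S$ is finitely generated free over $A$ on both sides, and that consequently $\Ext^i_S(M,S)$ can be identified with $\Ext^i_A(M,A)$. One then transports the non-vanishing of $\Ext^i_S(M,S)\otimes_S T$ down to $A$ and across to $B$ by purely formal tensor manipulations, using the commutative square at the last step.

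First I would record the structure theory. Since $A\to S$ realises $S$ as a crossed product $A\ast G$ with $G$ finite, $S$ is free of rank $|G|$ both as a left and as a right $A$-module, on the basis $\overline G$. Hence restriction of scalars along $A\to S$ is exact and carries projective left $S$-modules to projective left $A$-modules, so that a projective resolution $P_\bullet\to M$ over $S$ is simultaneously a projective resolution of $M$ over $A$. The crucial input is an isomorphism of right $A$-modules
\[
\Ext^i_S(M,S)\;\cong\;\Ext^i_A(M,A),
\]
in which the right $A$-module structure on the left-hand side is the restriction, along $A\to S$, of the natural right $S$-module structure. This is the left-module analogue of \cite[Lemma 5.4]{AB} (Remark \ref{AB}); one can also see it directly from the Frobenius structure of a finite crossed product: the ``coefficient of $\overline e$'' map $\epsilon\colon S\to A$ is an $(A,A)$-bimodule homomorphism, the assignment $s\mapsto(x\mapsto\epsilon(xs))$ is an $(S,A)$-bimodule isomorphism $S\cong\operatorname{Hom}_A({}_AS,{}_AA)$, and feeding this into $\operatorname{Hom}_S(P_\bullet,-)$ together with the restriction--coinduction adjunction $\operatorname{Hom}_S(-,\operatorname{Hom}_A({}_AS,A))\cong\operatorname{Hom}_A(\mathrm{res}(-),A)$ yields the displayed isomorphism on cohomology, compatibly with the right $A$-actions.

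The remainder is formal. Write $E=\Ext^i_S(M,S)$, a right $S$-module, and let $E|_A$ denote its restriction to a right $A$-module. By commutativity of the square, the left $A$-module structure on $T$ obtained via $A\to S\to T$ coincides with the one obtained via $A\to B\to T$. The canonical map $E|_A\otimes_A T\to E\otimes_S T$ is surjective, so the hypothesis $E\otimes_S T\neq 0$ forces $E|_A\otimes_A T\neq 0$, which by the isomorphism above reads $\Ext^i_A(M,A)\otimes_A T\neq 0$. Finally, since the $A$-module structure on $T$ factors through $B$, we have $\Ext^i_A(M,A)\otimes_A T\cong\bigl(\Ext^i_A(M,A)\otimes_A B\bigr)\otimes_B T$, so that $\Ext^i_A(M,A)\otimes_A B=0$ would force $\Ext^i_A(M,A)\otimes_A T$ to vanish, a contradiction. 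Hence $\Ext^i_A(M,A)\otimes_A B\neq 0$. The only genuinely non-formal ingredient is the $\Ext$-comparison; the point is to have it with enough naturality (right $A$-linearity) that the vanishing of a base change transfers between the two $\Ext$-modules, and everything else uses only the finiteness of $G$ and the given commutative square.
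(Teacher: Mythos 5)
Your proof is correct and follows essentially the same route as the paper's: invoke the left-module analogue of \cite[Lemma 5.4]{AB} (Remark \ref{AB}) to identify $\Ext^i_S(M,S)$ with $\Ext^i_A(M,A)$ as right $A$-modules, use the surjection $\Ext^i_S(M,S)\otimes_A T\twoheadrightarrow\Ext^i_S(M,S)\otimes_S T$, and factor $-\otimes_A T$ through $-\otimes_A B$ via the commutative square. The only differences are cosmetic — you argue directly while the paper argues by contradiction, and you add a supplementary Frobenius-style justification of the $\Ext$-comparison that the paper simply cites.
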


\begin{proof}

Assume that $\Ext_{A}^{i}(M,A)\otimes_{A}B=0$. Then $\Ext_{A}^{i}(M,A)\otimes_{A}T=0$.
However, by Remark \ref{AB} $\Ext_{A}^{i}(M,A)=\Ext_{S}^{i}(M,S)$
as right $A$-modules. Thus $\Ext_{S}^{i}(M,S)\otimes_{A}T=0$. But
$\Ext_{S}^{i}(M,S)\otimes_{A}T$ surjects onto $\Ext_{S}^{i}(M,S)\otimes_{S}T$,
a contradiction.
\end{proof}

We now come to the proof of Theorem \ref{thm: main}. Recall from
\cite[Theorem 3.3, Proposition 9.1(b)]{AW1}  and the paragraph below
it that $\mathcal{U}_{n}$ is Auslander-Gorenstein with self-injective
dimension $d$ (in fact it is Auslander regular). By \cite[Lemma
5.4]{AB}  $\mathcal{U}_{n}\ast H_{n}$ is also Auslander-Gorenstein of self-injective
dimension $d$ ($\mathcal{U}_{n}\ast H_{n}$ is also Auslander regular).
We will use this freely in the proof (in particular the fact that
the self-injective dimensions agree). 

\begin{prop}
\label{prop: main}
Theorem \ref{thm: main} is true when $G$ is uniform and $K=\Qp$.

\end{prop}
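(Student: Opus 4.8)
The plan is to run the reduction from the introduction and then import the split-case estimate of \cite{AW1} over a splitting field. \emph{Reduction to an affinoid enveloping algebra.} Fix a finitely generated, $p$-torsion-free $\Zp G$-submodule $M_0\subseteq M$ with $M_0\otimes_{\Zp}\Qp=M$. By Proposition \ref{cor: roundup}(4) there is $n_0$ such that $M_0$, hence $M$, is $S_n$-torsion-free for all $n\geq n_0$, and then Proposition \ref{cor: roundup}(2) shows $M_n:=(\mathcal{U}_n\ast H_n)\otimes_{\Qp G}M\neq0$ for $n\geq n_0$. Since $\Qp G\to\mathcal{U}_n\ast H_n$ is flat (Proposition \ref{cor: roundup}(1)), flat base change over the Noetherian ring $\Qp G$ together with Remark \ref{AB} gives, for all $i$,
\[
\Ext^i_{\Qp G}(M,\Qp G)\otimes_{\Qp G}(\mathcal{U}_n\ast H_n)\cong\Ext^i_{\mathcal{U}_n\ast H_n}(M_n,\mathcal{U}_n\ast H_n)\cong\Ext^i_{\mathcal{U}_n}(M_n,\mathcal{U}_n)
\]
as right $\mathcal{U}_n$-modules. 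As $\Qp G$, $\mathcal{U}_n\ast H_n$ and $\mathcal{U}_n$ all have self-injective dimension $d$, this yields $d_{\Qp G}(M)\geq d_{\mathcal{U}_n\ast H_n}(M_n)=d_{\mathcal{U}_n}(M_n)$ for $n\geq n_0$, so it suffices to produce one $n\geq n_0$ with $d_{\mathcal{U}_n}(M_n)\geq r$. I also record that change of rings in the crossed product $\Qp G=\Qp G^{p^n}\ast H_n$ identifies $M_n$, as a left $\mathcal{U}_n$-module, with $\mathcal{U}_n\otimes_{\Qp G^{p^n}}M$.

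\emph{Base change to a splitting field and the sandwich.} Choose a finite extension $L/\Qp$ for which $\mathcal{L}(G)\otimes_{\Qp}L$ is split, a split $\mathcal{O}_L$-Lie lattice $\mathfrak{g}\subseteq\mathcal{L}(G)\otimes_{\Qp}L$, and put $\mathfrak{h}_L=\mathfrak{h}\otimes_{\Zp}\mathcal{O}_L$; the invariant $r$, being attached to the root system over $L$, is unchanged. For exponents $c\leq b\leq n$ chosen large enough that the commutator conditions of Lemma \ref{lem: flatness} hold and $c\geq n_0$, one gets rank-$d$ $\mathcal{O}_L$-Lie lattices $p^n\mathfrak{h}_L\subseteq p^b\mathfrak{g}\subseteq p^c\mathfrak{h}_L$ and, by Lemma \ref{lem: flatness}, flat ring maps
\[
\mathcal{U}_n\otimes_{\Qp}L=\widehat{U(p^n\mathfrak{h}_L)}_L\longrightarrow W:=\widehat{U(p^b\mathfrak{g})}_L\longrightarrow\widehat{U(p^c\mathfrak{h}_L)}_L=\mathcal{U}_c\otimes_{\Qp}L,
\]
while each $\mathcal{U}_m\to\mathcal{U}_m\otimes_{\Qp}L$ is faithfully flat. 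Set $\tilde M:=W\otimes_{\mathcal{U}_n}M_n$, a finitely generated $W$-module. Since $p^b\mathfrak{g}$ is a $\pi$-power multiple of the split lattice $\mathfrak{g}$ and $p$ is odd and very good, $W$ falls under the theory of \cite{AW1}: it is Auslander--Gorenstein of self-injective dimension $d$, and the estimate of \cite{AW1} gives that $d_W(\tilde M)$ is either $0$ or at least $r$. Flat base change for $\Ext$ along $\mathcal{U}_n\to W$ gives $\Ext^i_{\mathcal{U}_n}(M_n,\mathcal{U}_n)\otimes_{\mathcal{U}_n}W\cong\Ext^i_W(\tilde M,W)$, hence $d_{\mathcal{U}_n}(M_n)\geq d_W(\tilde M)$ provided $\tilde M\neq0$; combined with the first step, $d_{\Qp G}(M)\geq d_W(\tilde M)$. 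Everything therefore reduces to showing $d_W(\tilde M)\geq1$.

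\emph{The module does not collapse.} The map $\mathcal{U}_n\otimes_{\Qp}L\to W$ is \emph{not} faithfully flat, so a priori $\tilde M$ could be zero or finite-dimensional over $L$; ruling this out is where the inclusion $p^b\mathfrak{g}\subseteq p^c\mathfrak{h}_L$ is needed. Suppose $d_W(\tilde M)\leq0$, i.e. $\Ext^i_W(\tilde M,W)=0$ for $i<d$. Flat base change along $W\to\mathcal{U}_c\otimes_{\Qp}L$ and faithfully flat descent along $\mathcal{U}_c\to\mathcal{U}_c\otimes_{\Qp}L$ then give $d_{\mathcal{U}_c}\big(\mathcal{U}_c\otimes_{\mathcal{U}_n}M_n\big)\leq0$, so by the dictionary for canonical dimension over affinoid enveloping algebras in \cite{AW1} the module $\mathcal{U}_c\otimes_{\mathcal{U}_n}M_n$ is finite-dimensional over $\Qp$. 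On the other hand, using the identification of $M_n$ from the first step,
\[
\mathcal{U}_c\otimes_{\mathcal{U}_n}M_n\cong\mathcal{U}_c\otimes_{\Qp G^{p^n}}M\cong\mathcal{U}_c\otimes_{\Qp G^{p^c}}\big(\Qp G^{p^c}\otimes_{\Qp G^{p^n}}M\big)
\]
surjects, via the counit of induction--restriction and right exactness of $\mathcal{U}_c\otimes_{\Qp G^{p^c}}(-)$, onto $\mathcal{U}_c\otimes_{\Qp G^{p^c}}M$; and since $c\geq n_0$ the restriction of $M$ to $\Qp G^{p^c}$ is $S_c$-torsion-free, hence embeds into its microlocalisation $\mathcal{U}_c\otimes_{\Qp G^{p^c}}M$, which is therefore infinite-dimensional over $\Qp$. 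This contradiction shows $d_W(\tilde M)\geq1$, whence $d_W(\tilde M)\geq r$ and, by the second step, $d_{\Qp G}(M)\geq r$.

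The step I expect to be the main obstacle is the third one: once $M$ has been pushed through the non-faithfully-flat localisation $\mathcal{U}_n\otimes_{\Qp}L\to W$, keeping track of the fact that $\tilde M$ is still ``large'' forces one to use both inclusions of the sandwich and a careful passage through the crossed products $\Qp G^{p^n}\ast H_n$---this is precisely the computation abstracted in Lemma \ref{lem: sandwich}. The remaining inputs, namely the flatness results of Section \ref{sec: flatness} and the split-case estimate of \cite{AW1} (the latter being the only place where the hypotheses on $p$ and the split lattice $\mathfrak{g}$ enter), are comparatively routine to combine.
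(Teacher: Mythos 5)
Your overall strategy matches the paper's (sandwich a split lattice between two $p$-power multiples of $\mathfrak{h}\otimes\mathcal{O}_L$, pass to affinoid enveloping algebras, invoke the split-case estimate of \cite{AW1}), but you diverge from the paper at the one step you yourself single out as the main obstacle, and that divergence introduces a gap.

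The paper tracks the right module $N:=\Ext^{j}_{\Qp G}(M,\Qp G)$ with $j=j_{\Qp G}(M)$, not $M$. It uses Proposition~\ref{cor: roundup}(4) to find $t$ with $N$ being $S_n$-torsion-free for $n\geq t$, replaces $G$ by $G^{p^t}$ so that $N\otimes_{\Qp G}\mathcal U_0\neq 0$, and then feeds exactly this non-vanishing into Lemma~\ref{lem: sandwich} (with $A=F\otimes\mathcal U_n$, $B=\widehat{U(p^m\mathfrak g)}_F$, $S=(F\otimes\mathcal U_n)\ast H_n$, $T=F\otimes\mathcal U_0$). The output of the sandwich lemma is $\Ext^{j}_{F\otimes\mathcal U_n}(F\otimes M_n,\cdot)\otimes B\neq 0$, which together with flatness of $A\to B$ and \cite[Prop.~2.6]{AW1} yields the \emph{equality} $d_{\Qp G}(M)=d_{\widehat{U(p^m\mathfrak g)}_F}(V)$. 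Then $d_{\Qp G}(M)\geq 1$ (from \cite[Lemma~10.13]{AW1}) transfers directly to $V$ and \cite[Thm.~9.10]{AW1} finishes.

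You instead arrange $M$ (via a lattice $M_0$) to be $S_n$-torsion-free, obtain only the inequality $d_{\Qp G}(M)\geq d_W(\tilde M)$, and then try to rule out $d_W(\tilde M)\leq 0$ by pushing down the other side of the sandwich to $\mathcal U_c$ and arguing that $\mathcal U_c\otimes_{\Qp G^{p^c}}M$ is infinite dimensional because ``$M$ embeds into its microlocalisation.'' That embedding is the gap. Proposition~\ref{cor: roundup}(2)--(4), as well as \cite[Prop.~10.6, Cor.~10.11]{AW1}, only control when $\mathcal U_c\otimes M$ \emph{vanishes}, not the kernel of $M\to\mathcal U_c\otimes M$; since $\mathcal U_c$ involves completion of the Ore localisation, injectivity of this map for $S_c$-torsion-free $M$ is not a formal consequence of flatness and is nowhere established in the toolkit you are allowed to use. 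Said differently: $S_c$-torsion-freeness of $M$ gives $\mathcal U_c\otimes M\neq 0$, but what you actually need for $d_{\mathcal U_c}(\mathcal U_c\otimes M)\geq 1$ is $S_c$-torsion-freeness of $N=\Ext^{j}(M,\cdot)$ (so that the grade is preserved under the non-faithfully-flat base change), and torsion-freeness of $M$ does not imply torsion-freeness of $N$. The fix is precisely what the paper does: work with $N$ from the outset and invoke Lemma~\ref{lem: sandwich} --- the lemma you correctly diagnose as the abstraction of this step, but do not actually use.

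Two smaller remarks. First, the identification $M_n\cong \mathcal U_n\otimes_{\Qp G^{p^n}}M$ as left $\mathcal U_n$-modules is correct and is implicitly used by the paper as well. Second, your chain of one-sided inequalities $d_{\Qp G}(M)\geq d_{\mathcal U_n\ast H_n}(M_n)=d_{\mathcal U_n}(M_n)\geq d_W(\tilde M)$ would indeed suffice for the theorem once $d_W(\tilde M)\geq 1$ is known, so the only missing piece is the non-collapse argument; everything else is sound and closely parallel to the paper.
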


\begin{proof}

Let $M$ be a finitely generated $\Qp G$-module which is
not finite dimensional as a $\Qp$-vector space. Let $j=j_{\Qp G}(M)$
and set $N:=\Ext_{\Qp G}^{j}(M,\Qp G)$, this is
a finitely generated right $\Qp G$-module. By Proposition \ref{cor: roundup} we can find $t \geq 0$ such that $N$ is $S_n$-torsion-free for all $n\geq t$. Now $N = \Ext_{\Qp G^{p^n}}^j (M, \Qp G^{p^n}$) and $d_{\Qp G}(M) = d_{\Qp G^{p^n}}(M)$ for any $n \geq 0$  by \cite[Lemma 5.4]{AB}, so by replacing $G$ by $G^{p^t}$ if necessary we may assume $t = 0$. Thus we can assume that $N$ is $S_0$-torsion-free.

Let $F$ be a finite extension of $\Qp$ such that $\mathcal{L}(G)\otimes_{\Qp}F$
is a split $F$-Lie algebra. Then we may find a split semisimple simply
connected algebraic group $\mathbb{G}/\mathcal{O}_{F}$ whose $\mathcal{O}_{F}$-Lie
algebra $\mathfrak{g}$ satisfies $\mathfrak{g}\otimes_{\mathcal{O}_{F}}F\cong\mathcal{L}(G)\otimes_{\Qp}F$.
We fix this isomorphism throughout and consider $\mathfrak{g}$ as
a subset of $\mathfrak{g}\otimes_{\mathcal{O}_{F}}F$. 
Recall that $\fr{h} = p^{-1}L_G$; we can find integers $n, m \geq 0$ such that
  \[ p^{n}(\mathfrak{h}\otimes_{\Zp}\mathcal{O}_{F})\subseteq p^{m}\mathfrak{g} \subseteq \mathfrak{h}\otimes_{\Zp}\mathcal{O}_{F}.\]
Now because $N$ is $S_0$-torsion-free, $N \otimes_{\Qp G} \mathcal{U}_0$ is non-zero, and therefore $N\otimes_{\Qp G}(\mathcal{U}_{n}\ast H_{n})$ is also non-zero. By applying Proposition \ref{cor: roundup} together with \cite[Proposition 2.6]{AW1} we deduce that 
\[
d_{\Qp G}(M)=d_{\mathcal{U}_0}(M_0)=d_{\mathcal{U}_{n}\ast H_{n}}(M_{n})
\]
where $M_{n}:=(\mathcal{U}_{n}\ast H_{n})\otimes_{\Qp G}M$. We may now again apply \cite[Lemma 5.4]{AB}  to deduce that 
\[
d_{\mathcal{U}_{n}\ast H_{n}}(M_{n})=d_{\mathcal{U}_{n}}(M_{n}).
\]
Furthermore \cite[Lemmas 2.6 and 3.9]{AW1}  give us that 
\[  d_{\mathcal{U}_{n}}(M_{n})=d_{F\otimes_{\Qp}\mathcal{U}_{n}}(F\otimes_{\Qp}M) \quad\mbox{and} \quad F\otimes_{\Qp}\mathcal{U}_{n}=\widehat{U(p^{n}\mathfrak{h}\otimes_{\Zp}\mathcal{O}_{F})}_{F}.\]
Since $F\otimes_{\Qp}\mathcal{U}_{n}\rightarrow F\otimes_{\Qp}\mathcal{U}_0$ factors through $\widehat{U(p^{m}\mathfrak{g})}_{F}$, there is a commutative
diagram 
\[
\xymatrix{F\otimes_{\Qp}\mathcal{U}_{n}\ar[r]\ar[d] & \widehat{U(p^{m}\mathfrak{g})}_{F}\ar[d]\\
(F\otimes_{\Qp}\mathcal{U}_{n})\ast H_{n}\ar[r] & F\otimes_{\Qp}\mathcal{U}_0.
}
\]
Let $S = (F \otimes_{\Qp} \mathcal{U}_n) \ast H_n$ and $T = F \otimes_{\Qp} \mathcal{U}_0$, and note that $F \otimes_{\Qp} M_n = S \otimes_{\Qp G} M$, so that 
\[\Ext^j_S(F \otimes_{\Qp} M_n, S) \otimes_S T \cong \Ext^j_S(S \otimes_{\Qp G}M, S) \otimes_S T \cong \Ext^j_{\Qp G}(M, \Qp G) \otimes_{\Qp G} T = N \otimes_{\Qp G} T \neq 0\]
because $N \otimes_{\Qp G} \mathcal{U}_0 \neq 0$ by construction.  Hence Lemma \ref{lem: sandwich} implies that
\[
\Ext^{j}_{F\otimes_{\Qp}\mathcal{U}_{n}}(F\otimes_{\Qp}M_n,F\otimes_{\Qp}\mathcal{U}_{n})\otimes_{F\otimes_{\Qp}\mathcal{U}_{n}}\widehat{U(p^{m}\mathfrak{g})}_{F}\neq0.
\]
Since $p^{n}\mathfrak{h}\otimes_{\Zp}\mathcal{O}_{F}\subseteq p^{m}\mathfrak{g}$ and $[p^{m}\mathfrak{g},p^{n}\mathfrak{h}\otimes_{\Zp}\mathcal{O}_{F}]\subseteq p^{n}\mathfrak{h}\otimes_{\Zp}\mathcal{O}_{F}$,
Lemma \ref{lem: flatness} implies that the top arrow in the above commutative diagram $F\otimes_{\Qp}\mathcal{U}_{n}\rightarrow\widehat{U(p^{m}\mathfrak{g})}_{F}$
is left and right flat. Thus \cite[Proposition 2.6]{AW1} applied to the top arrow in the commutative diagram above gives
\[d_{\Qp G}(M)=d_{F \otimes_{\Qp} \mathcal{U}_n}(F \otimes_{\Qp} M_n) = d_{\widehat{U(p^{m}\mathfrak{g})}_{F}}(V)\]
where $V := \widehat{U(p^m \fr{g})}_F \otimes_{F \otimes_{\Qp} \mathcal{U}_n} (F \otimes_{\Qp} M_n)$. Here we have used the fact that $\widehat{U(p^{m}\mathfrak{g})}_{F}$ is Auslander regular
of self-injective dimension $d$. Now \cite[Lemma 10.13]{AW1}  implies
that $d_{\Qp G}(M)\geq1$ since $M$ is not finite dimensional
as a $\Qp$-vector space (we note here that 
of \cite[Lemma 10.13]{AW1} does not require $G$ to satisfy the assumptions of
 of \cite[\S 10.12]{AW1}). Finally, \cite[Theorem 9.10]{AW1}  implies
that $d_{\widehat{U(p^{m}\mathfrak{g})}_{F}}(V)\geq r$, so $d_{\Qp G}(M)\geq r$ as desired.
\end{proof}

\section{Canonical dimensions for distribution algebras}

In this section we let $L\subseteq K$ be finite extensions of $\Qp$ and let $G$ be a locally $L$-analytic group. We let $D(G,K)$ denote the algebra of $K$-valued distributions on $G$, studied in depth in \cite{ST2}, which we refer to for more details and some terminology.

Following the notation of \cite{ST2} we let $G_{0}$ denote the underlying $\Qp$-analytic group obtained from $G$ by forgetting the $L$-structure. Recall (see e.g. \cite[\S 5.6]{Sch}) that $G$ is said to be $L$-uniform if $G_0$ is uniform and $L_{G_{0}}\subseteq \mc{L}(G)$ is an $\mc{O}_{L}$-lattice. If this holds we will write $L_{G}$ for $L_{G_{0}}$ with its $\mc{O}_{L}$-module structure. When $G$ is $L$-uniform, $D(G,K)$ carries a Fr\'echet-Stein structure given by the inverse system $(D_{r}(G,K))_{r\in [p^{-1},1)\cap p^{\mathbb{Q}}}$ (see e.g. \cite[\S 5.17]{Sch}). We put $r_{n}=p^{-1/p^{n}}$ and will from now on only consider the inverse system $(D_{r_{n}}(G,K))_{n\in \mathbb{Z}_{\geq 0}}$, which is final in the previous inverse system. The rings $D_{r_{n}}(G,K)$ are Auslander regular of self-injective and global dimension $d=\dim_{L}G$ when $n\geq 1$ by \cite[Proposition 9.3]{Sch}. Thus, the category of coadmissible $D(G,K)$-modules (\cite[\S 6]{ST2}) has a well defined dimension theory (\cite[\S 8]{ST2}). Note that if $G$ is $L$-uniform then so is $G^{p^{n}}$ for all $n\geq 0$, and we have $D_{r_{0}}(G^{p^{n}},K)\ast H_{n}= D_{r^{n}}(G,K)$ by \cite[Corollary 5.13]{Sch} with $H_{n}=G/G^{p^{n}}$ as in the previous section.

When $G$ is a general locally $L$-analytic group, $G$ has at least one (and hence many) open $L$-uniform subgroup(s); indeed any compact open locally $L$-analytic subgroup of $G$ has a basis of neighbourhoods of the identity consisting of open normal $L$-uniform subgroups by \cite[Corollary 4.4]{Sch1}. Thus the category of coadmissible $D(G,K)$-modules has a well defined dimension theory using the formalism of \cite[\S 8]{ST2}, defined by restriction to an arbitrary open $L$-uniform subgroup. Therefore it suffices, by definition, to prove Theorem \ref{thm: main2} for $L$-uniform groups.

We recall the link between affinoid enveloping algebras and distribution algebras. The first part of the following version of the Lazard isomorphism is essentially proved in \cite[\S 6.6]{Sch} but stated only in a special case; it was then proven (in somewhat more generality) in \cite[Lemma 5.2]{AW2}. We give a brief sketch of the proof for the convenience of the reader.

\begin{prop}
\label{prop: Lazard}
Assume that $G$ is $L$-uniform. Then there is an isomorphism $\Psi_{G}\, :\, \widehat{U(\mf{g})}_{K} \rightarrow D_{r_{0}}(G,K)$, where $\mf{g}=p^{-1}L_{G}$. It is compatible with morphisms $\alpha\, :\, G \rightarrow H$ in the sense that the diagram
\[
\xymatrix{ \widehat{U(\mf{g})}_{K} \ar[r]^{U(\alpha)} \ar[d]^{\Psi_{G}} & \widehat{U(\mf{h})}_{K} \ar[d]^{\Psi_{H}} \\
D_{r_{0}}(G,K) \ar[r]^{D(\alpha)} & D_{r_{0}}(H,K)
}
\]
commutes where $U(\alpha)$ and $D(\alpha)$ are the natural maps induced by $\alpha$.
\end{prop}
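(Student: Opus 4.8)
The isomorphism $\Psi_G$ should come from the classical Lazard isomorphism identifying, for a uniform pro-$p$ group $G_0$, the completed group algebra $\Qp G_0$ with a completion of the universal enveloping algebra of $L_{G_0}$, together with the finer analytic information about which completions match up. Concretely, for $L$-uniform $G$ with $\mf{g} = p^{-1}L_G$, one knows (this is exactly the content of \cite[\S 6.6]{Sch} in its special case, generalised in \cite[Lemma 5.2]{AW2}) that the $p$-adically completed enveloping algebra $\widehat{U(\mf{g})}_K = \widehat{U(p^{-1}L_G)}_K$ is canonically isomorphic to the norm completion $D_{r_0}(G,K)$ with $r_0 = p^{-1/p}$. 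So the first step is simply to invoke this: I would set up $\Psi_G$ as the map sending the image of $X \in \mf{g}$ to the corresponding element of $D_{r_0}(G,K)$ under the Lazard/exponential correspondence (on a $\Zp$-basis $v_1,\dots,v_d$ of $L_{G_0}$ this sends $p^{-1}v_i$ to something like $p^{-1}\log(g_i)$, or equivalently matches the PBW-type topological basis of $\widehat{U(\mf{g})}_K$ with the topological basis of $D_{r_0}(G,K)$ coming from the coordinates of the second kind), and then cite that this is a well-defined ring isomorphism, giving only a brief sketch: one checks that the given assignment on generators respects the relations because the Campbell--Hausdorff series converges appropriately in the $r_0$-norm, and that it is bijective by comparing the explicit orthonormal bases on both sides.

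**Functoriality.** For the compatibility square, let $\alpha\colon G \to H$ be a morphism of $L$-uniform groups. The right-hand vertical and horizontal maps $D(\alpha)$ and $U(\alpha)$ are the functorial maps on distributions and enveloping algebras respectively; $U(\alpha)$ is induced by the Lie algebra map $\mc L(\alpha)$, which carries $L_{G_0}$ into $L_{H_0}$ (hence $\mf g = p^{-1}L_{G_0}$ into $\mf h = p^{-1}L_{H_0}$) because $\alpha$ is a homomorphism of uniform pro-$p$ groups, and it is continuous for the relevant norms so it extends to the completions. To prove the square commutes it suffices, by density of $U(\mf g)$ in $\widehat{U(\mf g)}_K$ and continuity of all four maps, to check commutativity on the generators $X \in \mf g$, i.e. on $p^{-1}v$ for $v \in L_{G_0}$. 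There both composites send $p^{-1}v$ to $p^{-1}$ times the image of $v$: going one way, $\Psi_H(U(\alpha)(p^{-1}v)) = \Psi_H(p^{-1}\mc L(\alpha)(v))$, and going the other, $D(\alpha)(\Psi_G(p^{-1}v))$, so one reduces to the statement that the Lazard isomorphism intertwines $\mc L(\alpha)$ on Lie algebras with $D(\alpha)$ on (the degree-$\le 1$ part of) distributions — which is just the naturality of the $p$-adic logarithm/exponential: $\alpha \circ \exp_G = \exp_H \circ \mc L(\alpha)$ on the relevant neighbourhoods, equivalently $\mc L(\alpha) \circ \log_G = \log_H \circ \alpha$.

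**Main obstacle.** The only real subtlety is matching the two norms precisely — i.e. checking that the $p$-adic completion of $U(p^{-1}L_G)$ is exactly $D_{r_0}(G,K)$ with $r_0 = p^{-1/p}$ and not some neighbouring norm — and making sure the $\Zp$-Lie algebra normalisation ($\mf g = p^{-1}L_G$ rather than $L_G$ itself) is the one that makes the exponential series land in the right completion; getting the radius exactly right is what pins down the index $n=0$ (i.e. $r_0$) in the statement. Since this is precisely what \cite[\S 6.6]{Sch} and \cite[Lemma 5.2]{AW2} establish, I would not reprove it but cite it, limiting my sketch to: (a) recalling the explicit bases on both sides and the assignment between them, (b) noting that convergence of Campbell--Hausdorff in the $r_0$-norm gives the ring homomorphism property, and (c) observing that functoriality reduces to naturality of $\log$ as above. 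Everything else — associativity, that $\overline{H}$-translates work out, etc. — is routine and can be suppressed.
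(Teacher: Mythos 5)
Your proposal is not quite the paper's argument, and it glosses over the one point the paper's sketch is actually designed to address: the passage from the $\Qp$-analytic/$\Zp$-Lie-algebra setting (where the classical Lazard theory and the references \cite{Sch} live) to the locally $L$-analytic/$\mathcal{O}_L$-Lie-algebra setting that the statement requires. Your sketch works throughout with a $\Zp$-basis of $L_{G_0}$, the Campbell--Hausdorff series, and orthonormal bases, and even refers to the ``$\Zp$-Lie algebra normalisation'' of $\mathfrak{g}$; that is all natively a description of the isomorphism $\widehat{U(\mathfrak{g}_0)}_K \cong D_{r_0}(G_0,K)$ for the underlying $\Qp$-analytic group $G_0$. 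But $\widehat{U(\mathfrak{g})}_K$ is formed from the enveloping algebra of $\mathfrak{g}$ as an $\mathcal{O}_L$-Lie algebra, and $D_{r_0}(G,K)$ is a proper quotient of $D_{r_0}(G_0,K)$ cutting out locally $L$-analytic distributions. Nothing in your sketch explains why your map lands in, or descends to, these smaller algebras, and this is not a norm-bookkeeping issue (the issue you flag as the ``main obstacle'') but a structural one.

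The paper handles precisely this point. It invokes \cite[Proposition 6.3]{Sch} for the isomorphism $\Psi_{G_0}\colon \widehat{U(\mathfrak{g}_0\otimes_{\Zp}\mathcal{O}_L)}_K \to D_{r_0}(G_0,K)$ compatible with the Lie algebra embeddings, then identifies both sides of the desired isomorphism as quotients by the ideal generated by the same kernel $\mathfrak{k}=\ker(\mathfrak{g}_0\otimes_{\Zp}\mathcal{O}_L\to\mathfrak{g})$: on the distribution side this is \cite[Lemma 5.1]{Sch1}, and on the enveloping side it follows from the PBW filtration plus Artin--Rees before inverting $p$. The isomorphism $\Psi_G$ is then obtained by descent, and functoriality reduces (as you also observe, in essentially equivalent language) to naturality of the embedding $\mathcal{L}(G_0)\to D_{r_0}(G_0,K)$, i.e. of $\exp$. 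So the functoriality part of your proposal is fine and matches the paper; what is missing is the reduction from $L$-analytic to $\Qp$-analytic via $\mathfrak{k}$, without which your argument proves (a sketch of) the wrong statement.
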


\begin{proof}
We let $\mf{g}_{0}$ denote $\mf{g}$ thought of as a $\Zp$-Lie algebra by forgetting the $\mc{O}_{L}$-structure. Recall that there is a natural morphism $\mc{L}(G_{0}) \rightarrow D_{r_{0}}(G_{0},K)$ factoring through $D(G_{0},K)$ defined by
$$ Xf=\frac{d}{dt}\left( f(\exp(-tX)) \right)\mid_{t=0} $$
for $f\in \mc{C}^{\la}(G_{0},K)$ and $X\in \mc{L}(G_{0})$ (see e.g. \cite[\S 5.2]{Sch}); it gives an inclusion $\mf{g}\rightarrow D_{r_{0}}(G,K)$. By e.g. \cite[Proposition 6.3]{Sch} there is an isomorphism 
$$\Psi_{G_{0}} \, :\, \widehat{U(\mf{g}_{0})}_{K}=\widehat{U(\mf{g}_{0}\otimes_{\Zp}\mc{O}_{L})}_{K} \rightarrow D_{r_{0}}(G_{0},K) $$
which is compatible with the embeddings of $\mf{g}_{0}$ on both sides. Put $\mf{k}=\ker(\mf{g}_{0}\otimes_{\Zp}\mc{O}_{L} \rightarrow \mf{g})$ where the map is given by $a\otimes X \mapsto aX$. Then $D_{r_{0}}(G,K)$ is the quotient of $D_{r_{0}}(G_{0},K)$ by the ideal generated by $\mf{k}$ by \cite[Lemma 5.1]{Sch1}. Similarly, $\widehat{U(\mf{g})}_{K}$ is the quotient of $\widehat{U(\mf{g}_{0}\otimes_{\Zp}\mc{O}_{L})}_{K}$ by the ideal generated by $\mf{k}$ by a straightforward argument: using the PBW filtration one sees that the sequence 
$$ 0 \rightarrow \mf{k}.U(\mf{g}_{0}\otimes_{\Zp}\mc{O}_{L}) \rightarrow U(\mf{g}_{0}\otimes_{\Zp}\mc{O}_{L}) \rightarrow U(\mf{g}) \rightarrow 0 $$
is an exact sequence of finitely generated $U(\mf{g}_{0}\otimes_{\Zp}\mc{O}_{L})$-modules. Now use the Artin-Rees Lemma and tensor with $K$ to conclude. Thus $\Psi_{G_{0}}$ induces the desired isomorphism $\Psi_{G}$ by quotienting out by the ideal generated by $\mf{k}$ on the source and target. Finally, the compatibility with morphisms follows from the functoriality of the morphism $\mc{L}(G_{0}) \rightarrow D_{r_{0}}(G_{0},K)$, which is straightforward to check from the defining formula. 
\end{proof}

With these preparations we may now prove Theorem \ref{thm: main2} (which, we recall, one only needs to show for $L$-uniform groups).

\begin{prop}
Theorem \ref{thm: main2} holds for $L$-uniform groups $G$.
\end{prop}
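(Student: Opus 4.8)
The plan is to mimic the proof of Proposition \ref{prop: main} almost verbatim, replacing the family of crossed products $\mathcal{U}_n \ast H_n$ by the family $D_{r_n}(G,K)$ and using Proposition \ref{prop: Lazard} to identify the first stage $D_{r_0}(G^{p^n},K)$ with an affinoid enveloping algebra. First I would reduce to the case where $d_{D(G,K)}(M) \geq 1$ is actually an equality-free input: set $j = j_{D(G,K)}(M)$ and $N = \Ext^j_{D(G,K)}(M, D(G,K))$; since $M$ is coadmissible with $d_{D(G,K)}(M) \geq 1$, the module $N$ is a nonzero coadmissible right module, so $N \otimes_{D(G,K)} D_{r_n}(G,K) \neq 0$ for all sufficiently large $n$ (this is where one uses that the $D_{r_n}$ form a Fréchet--Stein structure and that a nonzero coadmissible module cannot vanish at every stage). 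After replacing $G$ by $G^{p^t}$ for suitable $t$ — legitimate because $D(G^{p^t},K)$ and $D(G,K)$ have the same dimension theory by \cite[Lemma 5.4]{AB} / the crossed product structure $D_{r_n}(G^{p^t},K) \ast (G/G^{p^t})$-type reindexing — I may assume $N \otimes_{D(G,K)} D_{r_0}(G,K) \neq 0$.

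Next I would put $\mathcal{U}_n = \widehat{U(p^n \mathfrak{g})}_K$ with $\mathfrak{g} = p^{-1}L_G$ (now an $\mathcal{O}_L$-Lie algebra), noting via Proposition \ref{prop: Lazard} applied to $G^{p^n}$ that $D_{r_0}(G^{p^n},K) \cong \widehat{U(p^n \mathfrak{g})}_K$, and then using \cite[Corollary 5.13]{Sch} in the form $D_{r_0}(G^{p^n},K) \ast H_n = D_{r_n}(G,K)$. The string of equalities $d_{D(G,K)}(M) = d_{D_{r_n}(G,K)}(M_n) = d_{\mathcal{U}_n \ast H_n}(M_n) = d_{\mathcal{U}_n}(M_n)$ follows exactly as before from \cite[Proposition 2.6]{AW1} (flatness of $D(G,K) \to D_{r_n}(G,K)$, which is part of the Fréchet--Stein axioms) and \cite[Lemma 5.4]{AB} for the crossed product $\mathcal{U}_n \ast H_n$, with $M_n := D_{r_n}(G,K) \otimes_{D(G,K)} M$. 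Then I base change to a finite extension $F/\Qp$ splitting $\mathcal{L}(G)$, choose $n, m \geq 0$ with $p^n(\mathfrak{h} \otimes_{\mathcal{O}_L} \mathcal{O}_F) \subseteq p^m \mathfrak{g}_{\mathrm{split}} \subseteq \mathfrak{h} \otimes_{\mathcal{O}_L} \mathcal{O}_F$ where $\mathfrak{h} = \mathfrak{g}$ here and $\mathfrak{g}_{\mathrm{split}}$ is a Chevalley lattice, and run the sandwich exactly as in Proposition \ref{prop: main}: the commutative square has $F \otimes_{\Qp} \mathcal{U}_n$ and $\widehat{U(p^m \mathfrak{g}_{\mathrm{split}})}_F$ on top, $(F \otimes_{\Qp} \mathcal{U}_n) \ast H_n$ and $F \otimes_{\Qp} \mathcal{U}_0 = F \otimes_{\Qp} D_{r_0}(G,K)$ on the bottom, Lemma \ref{lem: sandwich} forces $\Ext^j$ to survive base change along the flat (by Lemma \ref{lem: flatness}) top arrow, and finally \cite[Theorem 9.10]{AW1} gives $d \geq r$ for the resulting $\widehat{U(p^m \mathfrak{g}_{\mathrm{split}})}_F$-module.

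The main obstacle — and the one point genuinely needing care — is verifying that a nonzero coadmissible module $N$ really does satisfy $N \otimes_{D(G,K)} D_{r_0}(G^{p^n},K) \neq 0$ for $n$ large enough, i.e.\ the analogue of Proposition \ref{cor: roundup}(4) in the distribution-algebra setting. This is where the hypothesis $d_{D(G,K)}(M) \geq 1$ is used, but the bookkeeping is subtler than in the Iwasawa case: one must pass between the inverse system $(D_{r_n}(G,K))_n$ and the reindexed system $(D_{r_0}(G^{p^n},K) \ast H_n)_n$, and one must know that $\Ext^j$ commutes with the relevant flat base changes and with the crossed-product identification (Remark \ref{AB}). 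I expect this to follow from the Fréchet--Stein formalism of \cite[\S 8]{ST2} together with \cite[Corollary 5.13]{Sch} and Schmidt's results, essentially as in the first part of the proof of \cite[Theorem 9.9]{Sch}, but it is the step where the argument is not a literal transcription of Proposition \ref{prop: main}. Everything downstream — the choice of $F$, the lattice sandwich, Lemma \ref{lem: sandwich}, Lemma \ref{lem: flatness}, and the appeal to \cite[Theorem 9.10]{AW1} — then goes through word for word.
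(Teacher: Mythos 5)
Your proposal follows the paper's argument essentially verbatim: reduce to $L=K$, apply \cite[Lemma 8.4]{ST2} to get $N_n = N\otimes_D D_{r_n}(G,K)\neq 0$ for large $n$, replace $G$ by $G^{p^t}$ using the bimodule isomorphisms $D_n\cong D_{r_0}(G^{p^n},L)\otimes_{D(G^{p^n},L)}D$, identify $D_{r_0}(G^{p^n},K)$ with $\widehat{U(p^n\mf{h})}_K$ via Proposition \ref{prop: Lazard} and $D_{r_n}(G,K)$ with the crossed product via \cite[Corollary 5.13]{Sch}, then run the sandwich exactly as in Proposition \ref{prop: main}.

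One small correction to your reading of the proof's structure: the nonvanishing $N_n\neq 0$ for large $n$ does \emph{not} use the hypothesis $d_{D(G,K)}(M)\geq 1$; it follows already from $N$ being a nonzero coadmissible right module (nonzero by definition of $j$, since $M\neq 0$), via the Fr\'echet--Stein formalism \cite[Lemma 8.4]{ST2}. The hypothesis $d_{D(G,K)}(M)\geq 1$ is used only at the very end: the chain of equalities gives $d_{D(G,K)}(M)=d_{\widehat{U(p^m\mf{g})}_F}(V)$, and $d\geq 1$ is exactly what is needed for \cite[Theorem 9.10]{AW1} to apply to $V$ (it plays the same role that the ``infinite-dimensional'' hypothesis plays, via \cite[Lemma 10.13]{AW1}, in the Iwasawa case). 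Also a minor notational slip: the splitting field $F$ should be taken over $L$ (equivalently over $K$ after the reduction $L=K$), and the tensor products over $\mc{O}_L$, not over $\Qp$.
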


\begin{proof}
The proof is very similar to that of Proposition \ref{prop: main}. First of all note that without loss of generality $L=K$ by \cite[Lemma 2.6]{AW1}. For the purposes of this proof we put $D:=D(G,L)$ and $D_{n}:=D_{r_{n}}(G,L)$. Let $M$ be a coadmissible left $D$-module and put $N=\Ext^{j}_{D}(M,D)$ with $j=j_{D}(M)$. By \cite[Lemma 8.4]{ST2} we may find an integer $t\geq 0$ such that $N_{n}:=N\otimes_{D}D_{n}=\Ext_{D_{n}}(M_{n},D_{n})\neq 0$ for all $n\geq t$, where $M_{n}:=D_{n}\otimes_{D}M$. Because of the bimodule isomorphisms $D_n \cong D_{r_0}(G^{p^n},L) \otimes_{D(G^{p^n},L)} D$ and $D_n \cong D \otimes_{D(G^{p^n},L)} D_{r_0}(G^{p^n},L)$, we see that $M_n \cong D_{r_0}(G^{p^n},L) \otimes_{D(G^{p^n},L)} M$ and $N_n \cong N \otimes_{D(G^{p^n},L)} D_{r_0}(G^{p^n},L)$ for any $n \geq 0$. Therefore we may, as in the proof of Proposition \ref{prop: main}, replace $G$ by $G^{p^t}$ and without loss of generality assume that $t=0$. 

Pick a finite extension $F/L$ such that $\mc{L}(G)\otimes_{L}F$ is split and let $\mf{g}\subseteq \mc{L}(G)\otimes_{L}F$ be a split $\mc{O}_{F}$-sub-Lie algebra. Write $\mf{h} := p^{-1}L_G$ and pick positive integers $m$ and $n$ such that 
\[p^{n}\mf{h}\otimes_{\mc{O}_{L}}\mc{O}_{F} \subseteq p^{m}\mf{g} \subseteq \mf{h}\otimes_{\mc{O}_{L}}\mc{O}_{F}.\] 
We have that $d_{D}(M)=d_{D_{n}}(M_{n})$ and from here on we argue exactly as in the proof of Proposition \ref{prop: main}, using that $D_{n}=D_{r_{0}}(G^{p^{n}},L)\ast G/G^{p^{n}}=\widehat{U(p^{n}\mf{h})}_{L}\ast G/G^{p^{n}}$ using Proposition \ref{prop: Lazard}. Note that the compatibility statement in Proposition \ref{prop: Lazard} ensures that the diagram that Lemma \ref{lem: sandwich} has to be applied to, namely
\[
\xymatrix{D_{r_0}(G^{p^n},F)\ar[r]\ar[d] & \widehat{U(p^{m}\mathfrak{g})}_{F}\ar[d]\\
D_{r_n}(G,F)\ar[r] & F\otimes_L\widehat{U(\mf{h})}_L
}
\]
is commutative.
\end{proof}

\end{document}